\newcommand{\CR}{{}}
\let\oldmarginpar\marginpar
\renewcommand\marginpar[1]{\-\oldmarginpar[\raggedleft\footnotesize #1]%
{\raggedright\footnotesize #1}}
\newtheorem{theorem}{Theorem}
\newtheorem{corollary}{Corollary}
\theoremstyle{definition}
\newtheorem{example}{Example}
\newtheorem{question}{Question}
\theoremstyle{remark}
\newtheorem{remark}{Remark}
\newcommand{\Z}{\mathbb{Z}}
\newcommand{\D}{\mathbb{D}}
\newcommand{\abs}[1]{| #1 |}
\newcommand{\Bigabs}[1]{\Big| #1 \Big|}
\newcommand{\N}{\mathbb{N}}
\newcommand{\R}{\mathbb{R}}
\newcommand{\T}{\mathbb{T}}
\newcommand{\C}{\mathbb{C}}
\newcommand{\Hp}{\mathscr{H}}
\def\T{\mathbb{T}}
\def\N{\mathbb{N}}
\def\Z{\mathbb{Z}}
\def\R{\mathbb{R}}
\def\C{\mathbb{C}}
\def\1{\mathbf{1}}
\newcommand{\dif}{\mathrm{d}}
\newcommand{\e}{\mathrm{e}}
\newcommand{\im}{\mathrm{i}}
\newcommand{\norm}[1]{\|#1\|}
\renewcommand{\Re}{\operatorname{Re}}
\newcommand{\beqno}{\begin{eqnarray*}}
\newcommand{\eeqno}{\end{eqnarray*}}
\newcommand{\beqla}[1] {\begin {eqnarray}\label{#1}}
\def\eeq {\end {eqnarray}}
\newcommand{\beq}{\begin {eqnarray}}
\newcommand{\torus}{{\mathbb T}}
\begin{document}

 \title[Fatou and  brothers Riesz theorems in $\T^\infty$]{Fatou and  brothers Riesz theorems in \CR the infinite-dimensional polydisc}
 
\author[Aleman]{Alexandru Aleman} 
\address{Centre for Mathematical Sciences, Lund University, P.O. Box 118, SE-221 00 Lund, Sweden}
\email{aleman@maths.lth.se}

\author[Olsen]{Jan-Fredrik Olsen} 
\address{Centre for Mathematical Sciences, Lund University, P.O. Box 118, SE-221 00 Lund, Sweden}
\email{janfreol@maths.lth.se}

\author[Saksman]{Eero Saksman}
\address{Department of Mathematics and Statistics, University of Helsinki, PO~Box~68, FI-00014 Helsinki, Finland}
\email{eero.saksman@helsinki.fi}

\thanks{The third author was
supported \CR by the Finnish Academy CoE in Analysis and Dynamics Research and by a Knut and Alice Wallenberg Grant.}

\subjclass[2000]{\CR 32A05, 32A40, 31A20,42B30}
\keywords{\CR Boundary behaviour, infinite polydisc, Fatou type theorems, brothers Riesz theorems}

\begin{abstract} We study the boundary behavior of functions in the Hardy spaces on the infinite dimensional polydisk. These spaces are intimately related to the Hardy spaces of Dirichlet series.  We exhibit several Fatou  and Marcinkiewicz-Zygmund type theorems for radial convergence. As a consequence one obtains easy new proofs of {\CR the brothers F. and M. Riesz Theorems in infinite dimensions}, \textcolor{black}{as well as being able to extend a result of Rudin concerning which functions are equal to the modulus of an $H^1$ function almost everywhere to $\T^\infty$.} Finally, we provide counterexamples showing that the pointwise  Fatou theorem is not true in infinite dimensions without restrictions to the mode of radial convergence even for bounded  analytic functions.
\end{abstract}

\maketitle

\section{Introduction}

The object of study in this paper is the Hardy spaces $H^p$ on the infinite dimensional torus $\T^\infty = \{ (z_1, z_2, \ldots) : z_n \in \T\}$.  In recent years, there has been a renewed interest in these spaces, mainly due to their connection to Dirichlet series  and thereby to analytic number theory. We refer to \cite{queffelec2013} for  the related  theory of Dirichlet series and for basic references to the field.

In order to recall the definition of the space $H^p(\T^\infty)$ for $p\in [1,\infty],$ observe that $\T^\infty$ is a compact abelian group  with dual $\Z^\infty$ and Haar measure  $\dif \theta = \dif \theta_1 \times \dif \theta_2 \times \cdots$,  where $\dif \theta_n$ is the \textit{normalised} Haar measure on the $n$-th copy of $\T$. Elements  $f$ in $L^p(\T^\infty)$ are uniquely defined by their Fourier series expansion (see, e.g., \cite{rudin1962})
\begin{equation*}
	  f \sim \sum_{\nu \in \Z^\infty_0} a_\nu \e^{\im \theta \cdot \nu},
\end{equation*}
where the Fourier coefficients are defined in the standard manner {\CR  and $\nu\in\Z^\infty_0$ means that only finitely many of the components of  the index sequence $\nu$ are non-zero}.
One may now  define the Hardy spaces $H^p$ to be the analytic part of $L^p$ in the following way
\begin{equation*}
	H^p(\T^\infty) = \Big\{  f \in L^p(\T^\infty) : f   \sim \sum_{\nu \in \N^\infty_0} a_\nu \e^{\im \theta \cdot \nu}  \Big\}.
\end{equation*}
Note that also other notions of analyticity are possible in this setting (see, e.g., our Corollary \ref{corr-big}).

 A basic, and extremely useful, feature of the one variable theory is that  any function  $f\in H^p(\torus)$  can be extended to an analytic function on the open unit disc $\D$.   In particular,   the function $\e^{\im t} \longmapsto f(r\e^{\im t})$ is smooth and approximates the function $f$ in norm as  $r\nearrow 1$,  (weak-$\ast$, if $p=\infty$) and, for almost every $\e^{\im t}\in\T$, it holds that $f(re^{\im t})\to f(e^{\im t})$. This remains true in finite dimensions with almost no restrictions to the radial (or even non-tangential) approach, see Remark \ref{re:rudin} and Corollary \ref{re:rudin} below. 
 
 The purpose of the current paper is to initiate the investigation  of to which extent such Fatou-type approximations   hold in the infinite dimensional setting. We note that \cite{saksman_seip2009} contains some first steps in this direction for the space $H^\infty(\T^\infty).$
 
 While $\T^\infty$ is the distinguished boundary of $\D^\infty = (z_1, z_2, \ldots) : z_n \in \D\}$, it is no longer straight-forward to extend functions $f \in H^p(\T^\infty)$ to functions on the polydisc $\D^\infty$. 
 This is because   point evaluations for Hardy functions in the polydisk are well-defined only in $\ell^2\cap \D^\infty$ for $p<\infty$, see 
\cite{cole_gamelin1986}, and in $c_0\cap \D^\infty$ for $p=\infty$, see \cite{hilbert1909}. In particular, when formulating Fatou-type results, these restrictions have to be kept in mind.


Our first result, Theorem \ref{th:fatou} below, considers a boundary approach of the type
$$
(re^{i\theta_1},r^2e^{i\theta_2},r^3e^{i\theta_3},\ldots) \qquad \textrm{with}\quad r\nearrow 1^-,
$$
and shows that the standard Fatou type results remain  valid  for functions with Fourier spectrum supported on $\N^\infty_0 \cup (-\N^\infty_0)$. As a corollary, one obtains easy proofs of   infinite dimensional versions of some results due to the brothers M. and F. Riesz, see Corollaries \ref{co:riesz1} and \ref{co:riesz2}. Corollary \ref{co:hardy} yields a useful characterisation of elements $f\in H^1(\T^\infty)$ in terms of uniform $L^1$-boundedness of their the '$m$te Abschnitt'. Theorem \ref{th:infiniterudin} generalizes  to infinite dimensions the theorem of Marcinkiewicz and Zygmund concerning the  vanishing radial limits of singular measures  that have Fourier series supported on $\N^\infty_0 \cup (-\N^\infty_0)$. Finally,    {\CR Section \ref {se:examples} }  provides counter examples to unrestricted radial approach in infinite dimensions and poses some open questions. In particular, Theorem \ref{th:example}   yields a bounded analytic function with no boundary limit at almost every boundary point for a suitable radial approach.

\section{Fatou and  brother Riesz theorems in $\T^\infty$} \label{riesz section}

\newcommand{\Wi}{{\rm Wi}}
\newcommand{\TV}{{\rm TV}}

The inspiration for the present  paper, as well as parts of the above-cited paper \cite{saksman_seip2009}, is the work of  Helson \cite{helson1969}. He  introduced so-called vertical limit functions to   the theory of Dirichlet series in order to extend them analytically up to the imaginary axis. Somewhat simplified, he showed that a Dirichlet series
\begin{equation*}
	F(s) = \sum_{n=1}^\infty a_n n^{-s},
\end{equation*}
for which   $(a_n)_{n=1}^\infty \in \ell^2$ can,  in a   weak sense, be extended to all of $\Re s > 0$. This is not immediately clear, since such   Dirichlet series, in general, will only converge when $\Re s > 1/2$ ({\CR this} follows, e.g., from Cauchy-Schwarz).  However, the effect of  taking vertical limits of $F(s)$  is to replace the coefficients $(a_n)_{n=1}^\infty$ by $(\chi(n) a_n)_{n=1}^\infty$, where $n \longmapsto \chi(n)$ is a function from $\N$ to $\T$ that is multiplicative in the sense that $\chi(nm) = \chi(n)\chi(m)$ for all $n, m \in \Z$.  The statement of Helson is essentially that for almost every choice of $\chi$, the modified Dirichlet series, which we may denote by $F_\chi$, has an analytic extension up to the imaginary axis. {\CR Helson's proof  combines Fubini with one variable results to analytically extend  $F_\chi(\im t)$ to the right-half plane.}

{\CR The relation of Helson's result  to Hardy spaces $H^p(\T^\infty)$  takes place through the fundamental connection between  Dirichlet series and Hardy spaces on the polydisc,  due to  H.\,Bohr  \cite{bohr1913lift}.}  Indeed, functions on $\T^\infty$ formally become Dirichlet series when restricted to the path $t \mapsto (p_n^{-\im t})$, where $p_n$ is the $n$-th prime number. Explicitly, 
 \begin{equation*}
	 f \sim \sum_{\nu \in {\CR \N^\infty_0}} a_\nu \e^{\im \theta \cdot \nu} \implies F(\im t) := f(p_1^{\im t}, p_2^{\im t}, \ldots)  \sim    \sum_{n=1}^\infty a_{n} n^{-\im t},
\end{equation*}
where $n = p_1^{\nu_1} \cdots p_k^{\nu_k}$ and we identify $a_\nu$ to the corresponding coefficient $a_n$. Also note that for $\sigma>1/2$, the slightly modified path $t \mapsto (p_n^{-\im t  - \sigma})$ lies in $\D^\infty  \cap \ell^2$, and so, by the above mentioned results on bounded point evaluations, every $f \in H^p(\T^\infty)$ restricts to an analytic Dirichlet series. These   restrictions exactly  form the Dirichlet-Hardy spaces $\Hp^p$. {\CR Helson's result can then be reformulated as follows:}
 For almost every $\chi \in \T^\infty$, the restriction of a function $f \in H^{\CR p}(\T^\infty)$ to the path  $t \mapsto (\chi(p_n) p_n^{-\im t -\sigma})$ gives an analytic function on $\Re(\sigma + \im t) = \sigma  >0$.  

 {\CR A similar scheme  was used} in    \cite{saksman_seip2009} to approximate functions   $f \in H^\infty(\T^\infty)$ almost everywhere. First, note that   the restriction $F(s)$ is analytic on $\C_+ = \{ \Re s >  0 \}$, due to \cite{hilbert1909} and \cite{bohr1913lift} (also, in this connection see \cite{hls1997}).  Next,    fix $\chi = \e^{\im \theta_0}$, and extend  $\theta\mapsto f_{\e^{\im \theta_0}}(\e^{\im \theta}) := f(\e^{\im (\theta+\theta_0)})$ to the analytic function  $F_{\e^{\im \theta_0}}(s)$. Reversing the roles of the variables, put $\tilde{f}_{s}(\e^{\im \theta_0}) := F_{\e^{\im \theta_0}}(s)$.   {\CR Applying  ergodicity, one may now show that $\tilde{f}_s$ tends to $f$ almost everywhere as $s \rightarrow 0$ non-tangentially. Hence results from one-dimensional theory    can be used  to   deduce results on $H^\infty(\T^\infty)$. }
However, while {\CR in principle}  still feasible for $p \in [1,\infty)$, this approach  becomes cumbersome since $F_{\e^{\im \theta}}(s)$ is only defined on the strip $0 < \Re s \leq 1/2$   for almost every $\e^{\im \theta_0}$, and so the resulting   Fatou-type statements   are far from trivial.


{\CR One of the aims  of this note} is  to describe an alternative  approach to Fatou-type approximation\footnote{We are especially interested in the pointwise convergence at a boundary point since,  by using the density of polynomials, \CR practically any   natural and well-defined  radial approximation scheme leads to approximation in the $L^p$ norm for $p\in (1,\infty)$, and in the ${\rm weak}^*$ sense for $p=\infty.$} of    certain functions on the polydisc, which is in the same spirit as the extensions mentioned above{\CR , but is easier to deal with and yields stronger results}.  {\CR Our} idea is simple: given the function $f(\e^{\im \theta_1},\e^{\im \theta_2},\ldots)$, define a family of functions
\begin{equation}\label{eq:app}
f_\xi(\e^{\im \theta}):=f(\xi\e^{\im \theta_1},\xi^2\e^{\im \theta_2},\ldots),\quad \e^{\im \theta}\in \T^\infty,
\end{equation}
where $\xi\in \D$ is a complex parameter from the unit disc. We will soon define $f_\xi$ in a precise manner, but one should note that $f_\xi(\e^{\im \theta})$ is also well-defined pointwise by the mere fact that 
$(\xi\e^{\im \theta_1},\xi^2 \e^{\im \theta_2},\ldots)\in\ell^2$ since $|\xi|<1.$  The usefulness of introducing $f_\xi$ lies in the possibility of  fixing $\e^{\im \theta}\in \T^\infty$ and  employing, with a slight abuse of notation, the function of one variable $\xi\mapsto f_\xi(\e^{\im \theta})$ in order to transfer one-dimensional tools to the infinite-dimensional situation. Note that    $f_\xi(\e^{\im \theta})$ is harmonic with respect to $\xi \in \D$ exactly when $f$ has Fourier spectrum supported on $\N^\infty_0 \cup (-\N^\infty_0)$.

Let us begin by considering Poisson extensions of general measures. We define the space $\Wi(\T^\infty )\subset C(\T^\infty )$ consisting of the continuous functions $f:\torus^\infty\to\C$ with absolutely convergent Fourier series, i.e., with $\sum_{\nu \in \Z^\infty_0} |\widehat f(\nu)| <\infty.$ For any such $f$, the absolute convergence ensures that its unique polyharmonic extension to  $ \D^\infty$ is well-defined and actually continuous in  all of the closure $\overline{ \D}^\infty$. With yet a slight abuse of notation, we also denote this extension by $f$ so that for any $z=(\rho_1e^{i\theta_1},\rho_2e^{i\theta_2}\ldots )\in \overline{ { \D}}^\infty$, we have
\begin{equation}\label{eq:poly1}
f(z)=\sum_{\nu \in \Z^\infty_0}\widehat f(\nu)\rho^{|\nu|} e^{i\nu\cdot\theta}. 
\end{equation}
Here, we   employ the abbreviations  
 $$
 \rho^{|\nu|}:=\rho_1^{|\nu_1|}\rho_2^{|\nu_2|}\ldots \rho_\ell^{|\nu_\ell|} \quad {\rm and} \quad  |\nu|_1:=\sum_{j=1}^\ell \abs{\nu_j},
 $$
for any  multi-index $\nu=(\nu_1,\ldots,\nu_\ell)\in \Z^\infty_0$. 

Let $\mu$ be finite Borel measure on $\T^\infty$,  {and set $ \nu^*:=(\nu_1,2\nu_2,\ldots, \ell\nu_\ell)$.} Then for any $\xi=r e^{it}\in\overline{\D}$, we set, in accordance with (\ref{eq:app}),
\begin{equation}\label{eq:mu}
\mu_\xi=\sum_{\nu \in \Z^\infty_0}\widehat\mu(\nu)r^{|\nu^*|_1}e^{it(\nu_1+2\nu_2+\ldots )}e^{i\nu\cdot\theta}.
\end{equation}
Then, it holds that
$\mu_\xi\in \Wi(\T^\infty)$   for  all $\xi\in\D$, as 
one may compute
\begin{equation}\label{eq:mu3}
\|\mu_\xi\|_\Wi\leq c\sum_{\nu \in \Z^\infty_0}|\xi|^{|\nu^*|_1}=c\prod_{j=1}^\infty
\left(1+2\sum_{k=1}^\infty |\xi|^{kj}\right)=c\prod_{j=1}^\infty \left(\frac{1+|\xi|^j}{1-|\xi|^j}\right) <\infty.
\end{equation}
We may then define a `radial' maximal function of $\mu$ at every point $e^{i\theta}\in\T^\infty$ via
$$
M\mu(e^{i\theta}):=\sup_{r\in(0,1)}|\mu_r(e^{i\theta})|. 
 $$
\begin{remark}\label{remark1}
	Observe that although it follows from the above argument that the function $\xi \mapsto \mu_\xi$ is continuous  for every fixed $\e^{\im \theta} \in \T^\infty$, it is harmonic with respect to the variable $\xi \in \D$ if and only if the Fourier transform of $\mu$ has     support on the set $\N^\infty_0 \cup (-\N^\infty_0)$.
\end{remark}

e denote by   $A_m\mu$ Bohr's   '$m$te Abschnitt' of the measure $\mu$, which is defined by  $$A_m \mu \sim \sum_{\eta \in \Z^m}  \widehat{\mu}(\widetilde{\eta}) \e^{\im \theta \cdot \widetilde \eta},$$ where $\widetilde\eta=(\eta_1,\ldots \eta_m,0,0,\ldots)$ for $\eta \in \Z^m.$ In other words, the harmonic extensions satisfy
$A_m\mu(z)=\mu (z_1,\ldots, z_m,0,0,\ldots)$ for any $z\in\ell^1\cap \D^\infty.$

\begin{theorem}\label{th:fatou} \hspace{1 cm}

\noindent{\bf (i)}\quad For any  finite Borel measure $\mu$ on $\T^\infty $, one has
\begin{equation} 
 \int_{\torus^\infty}|\mu_\xi(\e^{\im \theta})| \dif \theta\leq \|\mu\|_{TV}\; \; {\rm for\; all}\;\; \xi\in\D \quad {\and}\quad \mu_r\stackrel{w^*}{\longrightarrow}\mu\quad {\rm as}\; r\nearrow 1.
 \end{equation}

\noindent{\bf (ii)}\quad When restricted to measures with Fourier spectrum supported   on $\N^\infty_0 \cup (-\N^\infty_0)$, $M$ is of weak type 1-1, i.e., there is $C<\infty$ such that for any  finite Borel measure $\mu$ on $\T^\infty $,  with Fourier spectrum supported on $\N^\infty_0 \cup (-\N^\infty_0)$, and $\lambda >0$, it holds that
\begin{equation}\label{eq:weak1}
\Bigabs{\big\{ \e^{\im \theta} \in \T^\infty : M\mu(\e^{\im \theta})>\lambda\}}\leq \frac{C\|\mu\|_\TV }{\lambda}.
\end{equation}
Moreover, the finite radial limit $\mu^*(\e^{\im \theta}):=\lim_{r\to 1^-}
\mu_r(\e^{\im \theta})$ exists for almost every $\e^{\im \theta}\in\T^\infty .$

\noindent{\bf (iii)}\quad For any  $f\in L^1(\T^\infty)$  with Fourier spectrum supported on $\N^\infty_0 \cup (-\N^\infty_0)$, one has 
$f(\e^{\im \theta})=\lim_{r\to 1}f_r(\e^{\im \theta})$  for  a.e. $\e^{\im \theta}\in\T^\infty.$
Moreover, $\|f_r-f\|_1\to 0$ as $r\nearrow 1.$

\noindent{\bf (iv)}\quad There is $C<\infty$ such that for any $f\in H^1(\T^\infty)$,
\begin{equation}\label{eq:weak3}
\|Mf\|_1\leq C\| f\|_{H^1(\T^\infty)}.
\end{equation}
\textcolor{black}{Similarly, for  analytic measures $\mu$  on $\T^\infty$, that is, measures $\mu$ with
$\widehat\mu (\nu)=0$ if $\nu\not\in \N^\infty_0 $, there exists a constant $C<\infty$ so that
\begin{equation}
\|M \mu \|_1\leq C\| \mu \|_{TV}.
\end{equation}}

\end{theorem}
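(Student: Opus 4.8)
The plan is to reduce both estimates to the classical one-variable Hardy--Littlewood maximal theorem for $H^1(\D)$ by freezing the boundary point $\e^{\im\theta}$ and exploiting that the resulting function of $\xi$ is \emph{holomorphic} (and not merely harmonic, as in part (ii)). Fix $f$ with Fourier spectrum in $\N^\infty_0$ and, for $\e^{\im\theta}\in\T^\infty$, set $F_\theta(\xi):=f_\xi(\e^{\im\theta})=\sum_{\nu\in\N^\infty_0}\widehat f(\nu)\,\xi^{|\nu^*|_1}\e^{\im\nu\cdot\theta}$. Since the exponents $|\nu^*|_1=\sum_j j\nu_j$ are non-negative integers and, for each fixed $k$, only finitely many $\nu\in\N^\infty_0$ satisfy $|\nu^*|_1=k$, every Taylor coefficient is a finite sum, so $\xi\mapsto F_\theta(\xi)$ is a genuine power series, holomorphic on $\D$ for each $\theta$ (it agrees with the already well-defined $f_\xi(\e^{\im\theta})$), and $Mf(\e^{\im\theta})=\sup_{0<r<1}|F_\theta(r)|$.

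The key structural observation is a rotation symmetry: writing $\tau=(1,2,3,\dots)$ one checks directly from the series that $F_{\theta+s\tau}(\xi)=F_\theta(\e^{\im s}\xi)$ for every $s\in\R$, since $\nu\cdot(\theta+s\tau)=\nu\cdot\theta+s\,|\nu^*|_1$. Because translation by $s\tau$ preserves Haar measure on $\T^\infty$, the radial maximal function of $F_\theta$ in the direction $\e^{\im s}$ is exactly a translate of $Mf$, namely $\sup_{0<r<1}|F_\theta(r\e^{\im s})|=Mf(\e^{\im(\theta+s\tau)})$. Averaging over $s$ thus converts the single-direction maximal function $M$ into the full circular maximal function of $F_\theta$, which is precisely the object controlled by the one-variable theorem.

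Concretely, I would apply the Hardy--Littlewood inequality $\tfrac1{2\pi}\int_0^{2\pi}\sup_{0<r<1}|F_\theta(r\e^{\im s})|\,\dif s\le C\,\|F_\theta\|_{H^1(\D)}$ for each fixed $\theta$ and integrate over $\T^\infty$. By the symmetry above together with Tonelli and the translation invariance of $\dif\theta$, the left-hand side integrates to $\|Mf\|_1$. For the right-hand side, since $|F_\theta|$ is subharmonic the circular means $\tfrac1{2\pi}\int_0^{2\pi}|F_\theta(r\e^{\im s})|\,\dif s$ increase in $r$, whence $\|F_\theta\|_{H^1(\D)}=\lim_{r\to1}\tfrac1{2\pi}\int_0^{2\pi}|F_\theta(r\e^{\im s})|\,\dif s$; applying monotone convergence, Tonelli, and once more $F_\theta(r\e^{\im s})=f_r(\e^{\im(\theta+s\tau)})$ gives $\int_{\T^\infty}\|F_\theta\|_{H^1(\D)}\,\dif\theta=\lim_{r\to1}\|f_r\|_1\le\|f\|_1$, the last step being the contraction from part (i). Hence $\|Mf\|_1\le C\|f\|_{H^1(\T^\infty)}$. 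The case of an analytic measure $\mu$ is identical: analyticity ($\widehat\mu(\nu)=0$ off $\N^\infty_0$) again makes $\xi\mapsto\mu_\xi(\e^{\im\theta})$ holomorphic, and the same chain of inequalities, with $\|\mu_r\|_1\le\|\mu\|_{\TV}$ from part (i) in place of $\|f_r\|_1\le\|f\|_1$, yields $\|M\mu\|_1\le C\|\mu\|_{\TV}$.

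The main obstacle is conceptual rather than computational: recognizing the rotation symmetry $F_{\theta+s\tau}=F_\theta(\e^{\im s}\,\cdot\,)$ and that averaging the one-direction maximal function $M$ over the Haar-preserving Kronecker-flow rotations reconstitutes the full circular maximal function of $F_\theta$. This is exactly what promotes the weak-type bound of part (ii) to a strong $L^1$ bound here, the improvement being available only because analyticity upgrades harmonicity of $\xi\mapsto f_\xi(\e^{\im\theta})$ to holomorphy, so that the strong-type $H^1(\D)$ maximal theorem (rather than merely the weak-type Poisson maximal estimate) may be invoked. The remaining points---joint measurability of $(\theta,s)\mapsto\sup_r|F_\theta(r\e^{\im s})|$ and the interchanges via Tonelli and monotone convergence---are routine.
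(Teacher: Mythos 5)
Your proposal addresses only part (iv); for that part the argument is correct and is essentially the paper's own proof. The paper likewise freezes $\e^{\im\theta}$ and works with $g_\theta(\xi):=\mu_\xi(\e^{\im\theta})$, which is holomorphic in $\xi$ exactly because the spectrum lies in $\N^\infty_0$; it uses the measure-preserving rotation $(\e^{\im\theta_1},\e^{\im\theta_2},\ldots)\mapsto(\e^{\im(\theta_1+t)},\e^{\im(\theta_2+2t)},\ldots)$ --- your identity $F_{\theta+s\tau}(\xi)=F_\theta(\e^{\im s}\xi)$ --- to identify the $s$-average of the directional radial maximal functions with $\|Mf\|_1$, and then invokes the one-variable strong-type radial maximal inequality for $H^1(\T)$ together with Fubini. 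The only difference is cosmetic: the paper obtains $\int_{\T^\infty}\|g_\theta\|_{H^1(\T)}\,\dif\theta\le\|\mu\|_{\mathrm{TV}}$ by Fatou's lemma applied to the $r$-integrals, whereas you use monotonicity of the circular means of the subharmonic function $|F_\theta|$ and monotone convergence; both routes rest on the contraction $\|\mu_\xi\|_1\le\|\mu\|_{\mathrm{TV}}$ of part (i), exactly as yours does.

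The genuine gap is coverage: the statement is all of Theorem \ref{th:fatou}, and parts (i)--(iii) are nowhere proved. This is not merely an omission of easier material, because your chain of inequalities terminates in ``the contraction from part (i)'', so the bound you do prove is conditional on a statement you never establish. Part (i) needs its own short argument: in the paper one checks on the $m$te Abschnitt that $A_m\mu_\xi=\bigl(P_1(\cdot,\xi)\cdots P_m(\cdot,\xi^m)\bigr)*\mu$ is a convolution of $\mu$ against a product of unit-mass Poisson kernels, whence $\|A_m\mu_\xi\|_1\le\|\mu\|_{\mathrm{TV}}$ uniformly in $m$, and the weak-star convergence follows. Parts (ii) and (iii) require the harmonic counterpart of your scheme: for spectrum in $\N^\infty_0\cup(-\N^\infty_0)$ the function $g_\theta$ is harmonic rather than holomorphic, Fatou's lemma gives $\int_{\T^\infty}\|g_\theta\|_{\mathrm{TV}}\,\dif\theta\le\|\mu\|_{\mathrm{TV}}$ (so $g_\theta$ is the Poisson integral of a finite measure for a.e. $\theta$), the one-variable weak-type $(1,1)$ estimate for radial maximal functions of measures replaces Hardy--Littlewood, and the a.e. existence of $\lim_{r\to 1^-}\mu_r(\e^{\im\theta})$ is extracted by a Fubini selection of a single good direction $\e^{\im t_0}$; part (iii) then follows from the weak-type bound and the density of trigonometric polynomials in $L^1(\T^\infty)$. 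All of this runs on the same rotation symmetry you isolated, so nothing conceptually new is needed, but as written the proposal establishes only one of the four assertions.
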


\begin{proof}
(i)\quad Because $\mu_\xi\in \Wi(\T^\infty )$, it is enough to verify that the   '$m$te Abschnitt'
$A_m\mu_\xi$ satisfies $\| A_m\mu_\xi\|_1\leq \|\mu\|_{TV}$ for every integer $m\geq 1.$ Indeed, this follows immediately by observing that
$$
A_m\mu_\xi= \big(P_1(\cdot,\xi)\ldots P_m(\cdot, \xi^m)\big)*\mu,
$$
where $P_j(\cdot,\xi)$ denotes the Poisson kernel on $\T$ at $\xi$ with respect to the $j$'th variable. 
The second statement follows from this bound and the very definition of $\mu_r$.

(ii)\quad  For each fixed $\e^{\im \theta}\in \T^\infty$, consider the function $g_\theta$ on $\D$, where for any $\xi\in\D$ we set $g_\theta (\xi):=\mu_\xi(\e^{\im \theta}).$ Formula \eqref{eq:mu} and the estimate  \eqref{eq:mu3}  verify that the Fourier development of $g_\theta$ converges uniformly in compact subsets of $\{ |\xi| <1\}$ and hence,  by Remark \ref{remark1},
$g_\theta$ is harmonic in $\D.$ Observe that the map $(\e^{\im \theta_1}, \e^{\im \theta_2},\ldots)\mapsto 
(e^{i(\theta_1+t)},e^{i(\theta_2 + 2t)},\ldots )$ is a measure-preserving homeomorphism. Hence, by Fubini,  we may compute, for any $r\in (0,1)$,
$$
\int_{\T^\infty}\left(\int_\T|g_\theta(re^{it})|\frac{dt}{2\pi}\right)\dif \theta
 =\int_\T\|\mu_{re^{it}}
\|_1 \frac{\dif t}{2\pi} \leq \|\mu\|_{\TV}<\infty .
$$
Letting $r\nearrow 1$, we obtain by Fatou's lemma
\begin{equation}\label{eq:fatou5}
\int_{\T^\infty}\|g_\theta\|_{h^1(\T)} \dif \theta=\int_{\T^\infty}\|g_\theta\|_{\TV} \dif \theta \leq \|\mu\|_{TV}.
\end{equation}
Thus, for a.e. $\e^{\im \theta}\in\T^\infty$, the function $g_\theta$ is the Poisson-extension of a finite measure (also denoted above by $g_\theta$) to $\D.$ Especially, for all these ${\theta}$ we deduce the  existence  of the finite limit $\lim_{r\to 1^-} g_\theta(r\e^{\im t})$ for almost every $\e^{\im t }\in \T$. By Fubini, there is at least one fixed $\e^{\im t_0}\in\T$ so that $\lim_{r\to 1^-}g_\theta(r\e^{\im t_0})=
\lim_{r\to 1^-}\mu_{r\e^{\im t_0}}(\e^{\im \theta})$ exists   a.e. $\e^{\im \theta} \in \T^\infty$, whence
the finite limit $
\lim_{r\to 1^-}\mu_{r}(\e^{\im \theta})$ exists almost everywhere.

Denote by $M_1$ the radial maximal function in the unit disc, and recall the one-dimensional strong to weak maximal inequality $\abs{\{\e^{\im t} \in \T : M_1 \eta(\e^{\im t})>\lambda\}}\leq C\lambda^{-1}\|\eta\|_{TV}$ for $\lambda >0.$ We then obtain by Fubini and \eqref{eq:fatou5}
\begin{equation*}
\begin{split}
&\Bigabs{\big\{\e^{\im \theta} \in\T^\infty\;:\;  M\mu(\e^{\im \theta})>\lambda\big\}}
=\int_{\T^\infty}\chi_{\{\e^{\im \theta}\;:\;  M\mu(\e^{\im \theta})>\lambda\}} \dif \theta\\
&=\int_\T\left(\int_{\T^\infty}\chi_{\{\e^{\im \theta}\;:\;  M\mu_{\e^{\im t}}(\e^{\im \theta})>\lambda\}} \dif \theta \right) \frac{\dif t }{2\pi}
=\int_{\T^\infty}\left(\int_\T\chi_{\{\e^{\im \theta}\;:\;  M_1\mu_{\e^{\im \theta}}(\e^{\im t})>\lambda\} }\frac{\dif t }{2\pi}\right) \dif \theta \\
&=\int_{\T^\infty}\abs{\big\{ \e^{\im t}\in \T\;:\;  M_1g_\theta(\e^{\im t})>\lambda\big\}} \dif \theta
\leq C\lambda^{-1}\int_{\T^\infty}\|g_\theta\|_{\TV} \dif \theta
\leq C\lambda^{-1}\|\mu\|_\TV .
\end{split}
\end{equation*}

(iii)\quad The claim follows in a standard manner from the weak-type inequality in part (ii), and the fact that finite trigonometric polynomials are dense in $L^1(\T^\infty )$, see e.g. \cite[Theorem I.5.3]{garnett1981}.

(iv)\quad Assume that $\mu$ is an analytic Borel measure on $\T^\infty$.  This time the functions $g_\theta$ defined in the beginning of the proof of part (ii) are analytic in $\D$ and the counterpart of \eqref{eq:fatou5} reads
\begin{equation}\label{eq:fatou}
\int_{\T^\infty}\|g_\theta\|_{H^1(\T)}\leq C\|\mu\|_{TV}.
\end{equation}
Thus, for almost every $\e^{\im \theta}\in \T^\infty$, we have $g_\theta\in H^1(\T).$
Consequently, for almost every $\e^{\im \theta}\in\T^\infty$, the finite limit $\lim_{r\to 1^{-}}g_{r\e^{\im t}}(\e^{\im \theta})$ exists for almost every $\e^{\im t}\in\T .$ Then, by Fubini,  the limit
$\lim_{r\to 1^{-}}g_{r}(\e^{\im \theta})$ exits for almost every $\e^{\im \theta}\in\T^\infty$ (of course this follows also from part (ii) of the theorem).  We call this function $g$.

Let $C$ stand for the finite constant in the 1-dimensional Fefferman-Stein  radial maximal inequality
$\| M_1h\|_{L^1(\T)}\leq C\|h\|_{H^1(\T)}.$ Then, Fubini and \eqref{eq:fatou} yield immediately the desired inequality
\begin{equation*}
\begin{split}
&\int_{\T^\infty}Mg(\e^{\im \theta}) \dif \theta=\int_{\T}\left(\int_{\T^\infty}M_1g_\theta(\e^{\im t}) \dif \theta\right) \frac{\dif t}{2 \pi} =\\ &\int_{\T^\infty}\left(\int_{\T}M_1g_\theta(\e^{\im t}) \frac{\dif t}{2\pi} \right) \dif \theta
\leq C\int_{\T^\infty}\|g_\theta\|_{H^1(\T)} \dif \theta \leq C\|\mu\|_\TV .
\end{split}
\end{equation*}
\end{proof}

As a corollary, we obtain new proofs of   two ``brothers Riesz'' theorems in infinite dimensions. Their generalization to $H^p$ spaces on groups was obtained in a very involved paper by Helson and Lowdenslager \cite{helson_lowdenslager1958}. Observe that our proof of the first result uses only parts (i) and (iv) of the previous theorem (and their proofs are independent of parts (ii) and (iii))
\begin{corollary}[F. and M. Riesz theorem I]\label{co:riesz1}
Every analytic measure $\mu$ on $\T^\infty$ is absolutely continuous.
\end{corollary}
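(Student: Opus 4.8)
The plan is to combine the $L^1$ maximal bound from part (iv) with the weak-$*$ convergence from part (i), and to use uniform integrability to identify the weak-$*$ limit of the smooth approximants $\mu_r$ as an absolutely continuous measure. First I would observe that, directly from the definition $M\mu(\e^{\im \theta})=\sup_{r\in(0,1)}|\mu_r(\e^{\im \theta})|$, one has the pointwise bound $|\mu_r(\e^{\im \theta})|\leq M\mu(\e^{\im \theta})$ for every $r\in(0,1)$. Since $\mu$ is analytic, the second inequality in part (iv) gives $M\mu\in L^1(\T^\infty)$, so the whole family $\{\mu_r\}_{r\in(0,1)}\subset \Wi(\T^\infty)\subset L^1(\T^\infty)$ is dominated by the single integrable function $M\mu$. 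Domination by a fixed $L^1$ function forces uniform integrability: because $\{|\mu_r|>N\}\subset\{M\mu>N\}$, we get $\int_{\{|\mu_r|>N\}}|\mu_r|\,\dif\theta\leq \int_{\{M\mu>N\}}M\mu\,\dif\theta\to 0$ as $N\to\infty$, uniformly in $r$.

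Next I would invoke the Dunford--Pettis theorem on the probability space $(\T^\infty,\dif\theta)$. Part (i) already supplies the uniform $L^1$ bound $\|\mu_r\|_1\leq\|\mu\|_{TV}$, and together with the uniform integrability just established this makes $\{\mu_r\}_{r\in(0,1)}$ relatively weakly sequentially compact in $L^1(\T^\infty)$. Hence I may extract a sequence $r_n\nearrow 1$ and a function $h\in L^1(\T^\infty)$ with $\mu_{r_n}\to h$ weakly in $L^1(\T^\infty)$.

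Finally I would identify the weak $L^1$ limit $h$ with $\mu$ itself. For any $\phi\in C(\T^\infty)$ (in particular $\phi\in L^\infty(\T^\infty)$), weak $L^1$ convergence yields $\int_{\T^\infty}\phi\,\mu_{r_n}\,\dif\theta\to\int_{\T^\infty}\phi\,h\,\dif\theta$, while part (i), which gives $\mu_r\stackrel{w^*}{\longrightarrow}\mu$, yields $\int_{\T^\infty}\phi\,\mu_{r_n}\,\dif\theta\to\int_{\T^\infty}\phi\,\dif\mu$ along the same sequence. Comparing the two limits gives $\int_{\T^\infty}\phi\,\dif\mu=\int_{\T^\infty}\phi\,h\,\dif\theta$ for all continuous $\phi$, and since continuous functions separate finite Borel measures on the compact space $\T^\infty$ this forces $\dif\mu=h\,\dif\theta$; that is, $\mu$ is absolutely continuous.

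I expect the only genuine subtlety to be the soft-analysis bookkeeping in the last step: making sure a single sequence $r_n$ simultaneously realizes the weak $L^1$ limit and the weak-$*$ limit, and that $C(\T^\infty)$ is a rich enough test class to force equality of the two measures (both of which are clean on the compact group $\T^\infty$). The real analytic content is entirely front-loaded into part (iv): the integrability of the maximal function $M\mu$ is precisely what makes $\{\mu_r\}$ uniformly integrable, and this is exactly the step that fails for general (non-analytic) measures, in keeping with the classical one-variable F.\ and M.\ Riesz picture.
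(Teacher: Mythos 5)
Your proof is correct, and it shares the paper's overall architecture --- part (iv) controls the family $\{\mu_r\}$, part (i) identifies the limit --- but the compactness mechanism in the middle is genuinely different. The paper's proof is two lines: it quotes from (the proof of) part (iv) that $\mu_r \to g$ almost everywhere for some $g \in H^1(\T^\infty)$, upgrades this to norm convergence $\|\mu_r - g\|_1 \to 0$ by dominated convergence with the integrable majorant $M\mu$, and then (implicitly, exactly as in your last step) uses the weak-$*$ convergence of part (i) to conclude $\dif\mu = g\,\dif\theta$. You instead avoid pointwise convergence entirely: domination by $M\mu \in L^1$ gives uniform integrability, Dunford--Pettis gives a subsequence $\mu_{r_n}$ converging weakly in $L^1$ to some $h$, and part (i) forces $\dif\mu = h\,\dif\theta$. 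What your route buys: it needs only the \emph{stated} content of Theorem \ref{th:fatou}, namely the maximal inequality $\|M\mu\|_1 \leq C\|\mu\|_{\mathrm{TV}}$ and the weak-$*$ convergence; the almost-everywhere existence of radial limits, which the paper's corollary proof takes from inside the proof of part (iv), never enters, so your argument is more modular with respect to the theorem as stated. What the paper's route buys: a stronger conclusion with no passage to a subsequence --- norm convergence of the whole family $\mu_r$ and identification of the density as the a.e. radial limit --- essentially for free, since the pointwise limit had already been constructed in proving (iv). Both arguments are complete, and both isolate the same analytic core, as you observe: integrability of the maximal function for analytic measures is precisely the ingredient that fails for general measures.
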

\begin{proof}
In part (iv) of Theorem \ref{th:fatou}, we see that $\mu_r\to g$ a.e. pointwise on $\T^\infty,$ for some $g \in H^1(\T^\infty)$, and we obtain that $\|\mu_r-g\|_1\to 0$ as $r\nearrow 1$ by employing the integrable majorant $M\mu.$ Hence $\mu$ is absolutely continuous with density $g.$ 
\end{proof}

\begin{corollary} \label{co:riesz2}
If $f \in H^1(\T^\infty)$, then $\log \abs{f} \in L^1(\T^\infty)$.
\end{corollary}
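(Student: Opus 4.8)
The plan is to reduce the statement to the classical one-variable fact that a nonzero $H^1(\T)$ function $h$ satisfies $\log|h|\in L^1(\T)$, and then to integrate this over the auxiliary slices $g_\theta$ produced in the proof of Theorem \ref{th:fatou}(iv). Concretely, given $f\in H^1(\T^\infty)$, I would apply part (iv) of Theorem \ref{th:fatou} to the (analytic) measure $\dif\mu=f\,\dif\theta$. This yields, for almost every $\e^{\im\theta}\in\T^\infty$, a one-variable function $g_\theta\in H^1(\T)$ whose radial limit reconstructs $f$, together with the crucial integrated bound \eqref{eq:fatou}, namely $\int_{\T^\infty}\|g_\theta\|_{H^1(\T)}\,\dif\theta\le C\|f\|_{H^1(\T^\infty)}<\infty$. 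Since the radial approach $\xi\mapsto f_\xi$ recovers $f$ almost everywhere by part (iii), I would identify $f(\e^{\im\theta})$ with the boundary value $g_\theta(\e^{\im t_0})$ at one fixed $\e^{\im t_0}$ (as in the Fubini argument inside the proof of (ii)), so that controlling $\log|g_\theta|$ on $\T$ controls $\log|f|$ on $\T^\infty$.

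The next step is to invoke the one-dimensional result: for each $\e^{\im\theta}$ with $g_\theta\not\equiv 0$, one has $\log|g_\theta|\in L^1(\T)$ and, by Jensen's inequality applied to the harmonic majorant, the pointwise lower bound $\int_\T \log|g_\theta(\e^{\im t})|\,\frac{\dif t}{2\pi}\ge \log|g_\theta(0)|$, while the \emph{upper} bound $\int_\T \log^+|g_\theta|\,\frac{\dif t}{2\pi}\le \|g_\theta\|_{H^1(\T)}$ follows from $\log^+ x\le x$. I would then integrate the decomposition $\log|g_\theta|=\log^+|g_\theta|-\log^-|g_\theta|$ over $\T^\infty$. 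The positive part is immediately integrable because
\begin{equation*}
\int_{\T^\infty}\int_\T \log^+|g_\theta(\e^{\im t})|\,\frac{\dif t}{2\pi}\,\dif\theta\le \int_{\T^\infty}\|g_\theta\|_{H^1(\T)}\,\dif\theta\le C\|f\|_{H^1(\T^\infty)},
\end{equation*}
using \eqref{eq:fatou}. For the negative part I would use Jensen in the form $\int_\T\log|g_\theta|\ge\log|g_\theta(0)|$ to write $\int_\T\log^-|g_\theta|\le \int_\T\log^+|g_\theta|-\log|g_\theta(0)|$, and then integrate in $\theta$; by Fubini the term $\int_{\T^\infty}\log|g_\theta(0)|\,\dif\theta$ is exactly a radial value of $f$ at $\xi=0$, hence finite.

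The main obstacle, and the point demanding care, is the treatment of the \emph{negative} part of $\log|f|$ and the possibility that $g_\theta$ might vanish (so that $\log|g_\theta(0)|=-\infty$) on a set of positive measure. One must first dispose of the trivial case $f\equiv 0$ separately, and then show that for $f\not\equiv 0$ the slices $g_\theta$ are nonzero for almost every $\e^{\im\theta}$; this should follow because the boundary function $f$ is nonzero a.e.\ on $\T^\infty$ (being a nonzero $H^1$ function, by Corollary \ref{co:riesz1} it is the density of an analytic measure and cannot vanish on a positive-measure set), which forces $g_\theta\not\equiv0$ for a.e.\ $\theta$. The remaining delicate quantitative issue is that the value $\log|g_\theta(0)|$ need only be integrable, not bounded below uniformly, so I would verify that $\e^{\im\theta}\mapsto g_\theta(0)$ is itself (the boundary trace of) an $H^1$-type function of the remaining variables, giving $\int_{\T^\infty}\log|g_\theta(0)|\,\dif\theta>-\infty$ by an inductive or direct appeal to the same integrated bound. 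Combining the finite upper bound on $\log^+|f|$ with this finite lower bound on $\int\log^-|f|$ yields $\log|f|\in L^1(\T^\infty)$.
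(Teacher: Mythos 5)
Your outline follows the paper's own strategy --- slice $f$ along $\xi\mapsto f_\xi(\e^{\im\theta})$, apply the one-variable Jensen/superharmonicity bounds to each slice, and integrate using the bound \eqref{eq:fatou} together with Fubini and the measure-preserving rotation --- but it has two genuine gaps. First, you overlooked the structural fact that makes the argument close: the value of every slice at the origin is $g_\theta(0)=\widehat f(0)=f(0)$ for \emph{every} $\theta$, since at $\xi=0$ only the $\nu=0$ term of the expansion $\sum_\nu \widehat f(\nu)\xi^{\nu_1+2\nu_2+\cdots}\e^{\im\nu\cdot\theta}$ survives. So $g_\theta(0)$ is not ``(the boundary trace of) an $H^1$-type function of the remaining variables'' --- it is a constant --- and your proposed inductive appeal to an integrated bound for $\int_{\T^\infty}\log|g_\theta(0)|\,\dif\theta$ has nothing to act on. When $f(0)\neq 0$ this constancy actually makes your proof easier than you feared (the Jensen lower bound is uniform in $\theta$, which is exactly how the paper argues via \eqref{eq:sh}); but when $f(0)=0$ with $f\not\equiv 0$ (e.g.\ $f(z)=z_1$), the bound $\int_\T\log|g_\theta|\,\frac{\dif t}{2\pi}\ge\log|g_\theta(0)|=-\infty$ is vacuous for every single $\theta$, and your argument collapses with no repair available along the lines you suggest. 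The paper handles precisely this case by a step you are missing: if $f\not\equiv 0$, some finite abschnitt $f(z_1,\ldots,z_k,0,0,\ldots)$ is non-zero at a point $(z_1,\ldots,z_k)\in\D^k$, and composing with M\"obius maps in those finitely many variables moves that point to the origin, reducing to the case $f(0)\neq 0$.

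Second, your treatment of non-vanishing of the slices is circular. You claim that a nonzero $H^1(\T^\infty)$ function cannot vanish on a set of positive measure ``by Corollary \ref{co:riesz1}''. But Corollary \ref{co:riesz1} asserts only that analytic measures are absolutely continuous; it says nothing about the zero set of the density. The a.e.\ non-vanishing of a non-trivial $H^1$ function is a \emph{consequence} of the log-integrability you are trying to prove (Corollary \ref{co:riesz2} itself), so it cannot be used as an ingredient. In the paper's proof this issue never arises: the identity $f_\theta(0)=f(0)\neq 0$ (after the M\"obius reduction) certifies at once that every slice is a non-trivial $H^1(\T)$ function. In short: right skeleton, but the case $f(0)=0$ is a real missing idea, and the non-vanishing step as written assumes what is to be proved.
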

\begin{proof}
First, assume that $f(0) \neq 0$. For any $g\in H^1(\T)$ with $g(0)\not=0$, it is classical that
\begin{equation}\label{eq:sh}
-\|g\|_{H^1}\leq \int_\T \log\left(\frac{1}{|g(\e^{\im t})|}\right) \frac{\dif t}{2\pi} \leq \log\left(\frac{1}{|g(0)|}\right).
\end{equation}
Actually, this follows directly from
the  superharmonicity of $\log(1/|g|)$ and the simple inequality $-x\leq \log(1/x)$ for $x>0$. Let again $f_\theta(\xi)=f_\xi(\e^{\im \theta})$ for $\xi\in\D$ and $\e^{\im \theta }\in \T^\infty$, and observe that part (iii) of Theorem \ref{th:fatou} verifies that the (a.s. in $\e^{\im t}$) boundary values $f_\theta(\e^{\im  t})$ satisfy $f_\theta(\e^{\im t}) =f(\e^{\im (\theta_1 + t)}, \e^{\im (\theta_2 + 2t)}, \ldots)$ for almost every $(\e^{\im t},\e^{\im \theta})\in \T\times \T^\infty.$
The claim of the theorem follows simply by Fubini after one substitutes $g=f_\theta$
in \eqref{eq:sh}, integrates over $\T^\infty$ and observes that $f_\theta(0)=  f(0)\not=0$ for all $\e^{\im \theta}\in\T^\infty .$ Here, the left-hand inequality is just used to assure integrability.

Suppose $f(0)= 0$. If $f$ is not identically equal to zero, the same holds true for some abschnitt of $f$, i.e., there exist $z_1, \ldots, z_k \in \D$ such that $f(z_1, \ldots, z_d, 0, 0, \ldots)$ is non-zero. Composing $f$ with appropriate M\"obius transforms $\phi_i$, each sending the origin to $z_i$ in the $k$ first variables,  we obtain the desired conclusion   by  applying the above argument to the resulting function $f(\phi_1(z_1), \ldots, \phi_k(z_k), z_{k+1}, \ldots)$.
\end{proof}

By a standard ${\rm weak}^*$-convergence argument we obtain the following useful statement as a consequence of Corollary \ref{co:riesz1}.

\textcolor{black}{
\begin{corollary} \label{co:hardy}
Assume that $\mu$ is a formal Fourier series with spectrum supported on $\N^\infty_0$   such that $A_m \mu \in H^1(\T^\infty)$ with $\|A_m \mu\|_{H^1(\T^m)}\leq C$ for all $m\geq 1$. Then $\mu=f$ for some $f\in H^1(\T^\infty)$ with
$\|f\|_{H^1(\T^\infty)}\leq C$.
\end{corollary}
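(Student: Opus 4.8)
The plan is to realise $\mu$ as a weak-$*$ limit of its Abschnitte inside the space of measures $\mathcal{M}(\T^\infty)=C(\T^\infty)^*$, and then to upgrade the limiting measure to an $H^1$ function by invoking Corollary \ref{co:riesz1}. First I would observe that each $A_m\mu$, being an element of $H^1(\T^m)$, is in particular an $L^1$ function on $\T^\infty$ depending only on the first $m$ coordinates, and that $\|A_m\mu\|_{L^1(\T^\infty)}=\|A_m\mu\|_{H^1(\T^m)}\le C$. Viewing the absolutely continuous measures $A_m\mu\,\dif\theta$ as elements of $\mathcal{M}(\T^\infty)$, we thus obtain a sequence of measures bounded by $C$ in total variation.

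Next, since $\T^\infty$ is compact and metrizable, $C(\T^\infty)$ is separable, so the closed ball of radius $C$ in $\mathcal{M}(\T^\infty)$ is weak-$*$ sequentially compact. I would extract a subsequence $A_{m_k}\mu$ converging weak-$*$ to some $\mu_\infty\in\mathcal{M}(\T^\infty)$; by the weak-$*$ lower semicontinuity of the norm we get $\|\mu_\infty\|_{TV}\le C$. To identify $\mu_\infty$ I would compute its Fourier coefficients: testing weak-$*$ convergence against the continuous character $\e^{\im\nu\cdot\theta}$ yields $\widehat{A_{m_k}\mu}(\nu)\to\widehat{\mu_\infty}(\nu)$, while for every $m_k$ larger than the largest active index of $\nu$ one has $\widehat{A_{m_k}\mu}(\nu)=\widehat\mu(\nu)$. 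Hence $\widehat{\mu_\infty}(\nu)=\widehat\mu(\nu)$ for all $\nu$; in particular $\mu_\infty$ has spectrum contained in $\N^\infty_0$, i.e.\ it is an analytic measure.

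Finally, Corollary \ref{co:riesz1} forces this analytic measure to be absolutely continuous, say $\mu_\infty=f\,\dif\theta$. Since the spectrum of $f$ lies in $\N^\infty_0$ we have $f\in H^1(\T^\infty)$, and because $f$ is the density of an absolutely continuous measure, $\|f\|_{H^1(\T^\infty)}=\|f\|_{L^1(\T^\infty)}=\|\mu_\infty\|_{TV}\le C$. The equality $\widehat f=\widehat\mu$ of all Fourier coefficients gives $\mu=f$, as claimed. (Since the Fourier coefficients, hence the limiting measure, are independent of the chosen subsequence, the full sequence $A_m\mu$ in fact converges weak-$*$ to $f$, though this is not needed.)

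The only real content is arranging the compactness in the correct ambient space: the natural bound is in total variation, so one must work in $\mathcal{M}(\T^\infty)$ rather than in $L^1(\T^\infty)$, which is not a dual space and offers no weak-$*$ compactness. The genuine obstacle that this scheme overcomes is that the weak-$*$ limit is a priori only a measure and could carry a singular part; it is precisely the infinite-dimensional brothers Riesz theorem (Corollary \ref{co:riesz1}) that rules this out and delivers an honest $H^1$ function. Everything else — metrizability giving sequential compactness, norm lower semicontinuity, and the convergence of Fourier coefficients — is routine.
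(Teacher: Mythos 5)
Your proof is correct and is exactly the ``standard ${\rm weak}^*$-convergence argument'' the paper invokes: uniform total-variation bounds on the Abschnitte give a weak-$*$ limit measure in $\mathcal{M}(\T^\infty)$, Fourier coefficients identify it with $\mu$ and show it is analytic, and Corollary \ref{co:riesz1} upgrades it to an $H^1$ density with the same norm bound. No gaps; the details you supply (metrizability of $\T^\infty$ for sequential compactness, lower semicontinuity of the norm, testing against characters) are precisely the routine steps the paper leaves implicit.
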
}
\noindent The same statement is naturally also true for $p>1$, but then the brothers Riesz theorem is not needed in the proof.

\begin{remark}\label{re:Helson}
The method used in Theorem \ref{th:fatou} is  considerably simpler and gives stronger results than the Helson type approach using the Dirichlet series mentioned in the beginning of this section. However, it lacks a couple of beautiful features of the latter one. For instance, in the Helson method the torus $\T^\infty$ is divided in orbits (corresponding to a parameter $t\in\R$) that are {\it ergodic} with respect to the basic measure $m$. \textcolor{black}{Moreover,  the approach of Helson uniquely maps functions on $\T^\infty$ to functions of one complex variable. Our method lacks this uniqueness property. Indeed, restricting functions on $\T^\infty$ to   $(\xi, \xi^2, \xi^3, \ldots)$ maps, say, the monomials $z_1^2$ and $z_2$ to the same one dimensional function $\xi \mapsto \xi^2$.}
\end{remark}

We next give an infinite-dimensional version of a theorem of Marcinkiewicz and Zygmund \cite[Theorem 2.3.1]{rudin1969} (see also \cite[Chapter 17]{zygmund1968}).
\begin{theorem}\label{th:infiniterudin} \textcolor{black}{Let $\mu$ be a Borel measure on $\T^\infty$ 
with decomposition $d\mu= fd  m+ \dif \mu_s$, where   both $f \in L^1(\T^\infty)$ and the singular part $\mu_s$ have Fourier spectrum supported on $\N^\infty_0 \cup (-\N^\infty_0)$.     Then,  for almost every $e^{it}\in \T^\infty$, it holds that $$
\mu^*(e^{\im t})=f(e^{\im t}).
$$}
\end{theorem}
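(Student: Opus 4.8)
The plan is to reduce the infinite-dimensional statement to the classical one-dimensional Marcinkiewicz--Zygmund theorem via the same slicing device that powered Theorem~\ref{th:fatou}. For a fixed $\e^{\im \theta} \in \T^\infty$, I would consider the one-variable function $g_\theta(\xi) := \mu_\xi(\e^{\im \theta})$ on the disc $\D$, which by Remark~\ref{remark1} is harmonic precisely because $\mu$ has Fourier spectrum on $\N^\infty_0 \cup (-\N^\infty_0)$. The key realization is that the slicing map $(\e^{\im \theta_1}, \e^{\im \theta_2}, \ldots) \mapsto (\e^{\im(\theta_1 + t)}, \e^{\im(\theta_2 + 2t)}, \ldots)$ is measure-preserving, so that after passing to the boundary, $g_\theta$ should be identified as the Poisson/radial extension of a one-dimensional \emph{measure} $\eta_\theta$ on $\T$ whose absolutely continuous part is governed by $f$ and whose singular part is governed by $\mu_s$.

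The decisive step is to verify that this decomposition of $\mu = f\,\dif m + \mu_s$ slices correctly: I would want to show that for almost every $\e^{\im \theta} \in \T^\infty$, the one-dimensional measure $\eta_\theta$ has Lebesgue decomposition $\dif\eta_\theta = f_\theta \,\dif t/(2\pi) + \dif(\eta_\theta)_s$, where $f_\theta(\e^{\im t}) = f(\e^{\im(\theta_1+t)}, \e^{\im(\theta_2 + 2t)}, \ldots)$ and $(\eta_\theta)_s$ is singular. The absolutely continuous part is handled by part~(iii) of Theorem~\ref{th:fatou} applied to $f$, which already gives $f_r \to f$ a.e.\ and identifies the boundary values of the slice of $f$. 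For the singular part $\mu_s$, I would apply part~(ii) of Theorem~\ref{th:fatou}: its radial limit $\mu_s^*$ exists a.e.\ and, because $\mu_s \perp m$, one expects $\mu_s^* = 0$ a.e.\ on $\T^\infty$. Granting this, the finite radial limit of $\mu_r$ is a.e.\ equal to $f$.

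To nail the vanishing of the singular part's contribution, I would invoke the classical Marcinkiewicz--Zygmund theorem on $\T$: if $(\eta_\theta)_s$ is the singular part of $\eta_\theta$ and this singular measure has one-sided (analytic-type) spectrum, then its radial limit vanishes a.e.\ on $\T$. The slicing must transport the spectral support condition $\supp \widehat{\mu_s} \subset \N^\infty_0 \cup (-\N^\infty_0)$ down to the corresponding one-sided support for the sliced measure on $\T$, which is exactly what makes $g_\theta$ harmonic with the right analyticity. Combining the one-dimensional limit statements over a single good fiber $\e^{\im t_0}$ (as in the Fubini argument concluding part~(ii)), and then integrating out, yields $\mu^*(\e^{\im t}) = f(\e^{\im t})$ for a.e.\ point.

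The main obstacle I anticipate is the careful bookkeeping in the slicing step: one must ensure that the Lebesgue decomposition is preserved under the fiber map for almost every $\theta$, i.e.\ that the singular part does not leak into an absolutely continuous contribution along generic slices, and conversely. This requires identifying $g_\theta$ not merely as the extension of \emph{some} measure $\eta_\theta$ (which \eqref{eq:fatou5} already provides) but as the slice of $\mu$ whose singular and absolutely continuous parts match those of $\mu$ fiberwise. The cleanest route is probably to apply the one-dimensional Marcinkiewicz--Zygmund theorem directly to $\eta_\theta$ on $\T$ for a.e.\ $\theta$ --- for which the spectral condition is inherited --- and then reconcile the resulting a.e.-in-$t$ statement with the a.e.-in-$\theta$ statement via Fubini, exactly mirroring the passage from slices back to $\T^\infty$ used at the end of the proof of Theorem~\ref{th:fatou}(ii).
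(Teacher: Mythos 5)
Your slicing setup (harmonicity of $g_\theta$, the bound \eqref{eq:fatou5}, the rotation/Fubini transfer) is sound, but the proof has a genuine gap at exactly the point you dismiss as ``careful bookkeeping'': the claim that, for almost every $\e^{\im\theta}$, the boundary measure of the sliced extension of $\mu_s$ is a \emph{singular} measure on $\T$ (equivalently, that the singular part of $\mu$ leaks no absolutely continuous mass into almost every slice). This is not bookkeeping; it is the entire content of the theorem. The slice measure is a global object --- its Fourier coefficients collect $\widehat{\mu_s}(\nu)\e^{\im\nu\cdot\theta}$ over the infinite fibers $\{\nu : \nu\cdot(1,2,3,\ldots)=k\}$ --- and is \emph{not} a conditional measure of $\mu_s$ on the orbit through $\theta$, so singularity on $\T^\infty$ does not pass to the slices by any soft disintegration argument. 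Worse, the claim is circular: granted the one-dimensional Fatou theorem for measures and your Fubini/rotation step, the statement ``the slice of $\mu_s$ is singular for a.e.\ $\theta$'' is \emph{equivalent} to the statement ``$\lim_{r\to 1^-}(\mu_s)_r=0$ a.e.\ on $\T^\infty$'', which is precisely Theorem \ref{th:infiniterudin} for singular $\mu$. Your ``cleanest route'' (apply the one-dimensional theorem to $\eta_\theta$ and Fubini back) therefore only yields that $\mu^*$ equals a.e.\ the density of the absolutely continuous part of the slices; this equals $f$ exactly when the unproven claim holds. (Note also that in one dimension singular measures have vanishing radial limits with no spectral hypothesis whatsoever, so ``inheriting the spectral condition'' buys you nothing there; what you are missing is singularity of the slices, not their spectrum. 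By contrast, the absolutely continuous half of your decomposition claim \emph{is} mere bookkeeping, via polynomial approximation and \eqref{eq:fatou5} applied to $(f-p_n)\,\dif m$.)

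The paper closes precisely this hole by a non-slicing argument for the singular part. After reducing, via Theorem \ref{th:fatou}(ii) and (iii), to the case where $\mu$ is singular and positive, it works directly on $\T^\infty$: for any fixed sequence $r_k\nearrow 1$ and any positive continuous $f$, dominated convergence, Fubini and Fatou's lemma give $\int \mu^\ast f\,\dif\theta \le \int f\,\dif\mu$, and then inner regularity of Lebesgue measure, outer regularity of $\mu$ and Urysohn's lemma force $\lim_k \mathcal{P}_{r_k}\mu = 0$ a.e. Since the exceptional null set depends on the sequence and $r$ ranges over an uncountable set, a second ingredient is needed: the product-kernel comparison estimate \eqref{eq:stayinlimit} along $r_k=1-k^{-1/3}$, which together with the positivity of $\mu$ upgrades sequential convergence to unrestricted radial convergence. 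Neither ingredient has a counterpart in your proposal, and without them (or an independent proof of your slicing claim) the argument does not close.
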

\begin{proof}
In light of Theorem \ref{th:fatou}, parts (ii) and (iii), we may assume that $\mu$ is singular. In addition, we may  assume that $\mu$ is positive.  We define
 \newcommand{\PO}{\mathcal{P}}
$$
\textcolor{black}{\mu^*(e^{i\theta})=\lim_{r\to 1^-} [\PO_r \mu] (\e^{\im\theta}),}
$$
where $\PO_r\mu$ is the convolution of $\mu$ against 
 the product kernel
 \newcommand{\PK}{\mathcal{P}}
\begin{equation*}
	 \PO_r(\theta) = \prod_n \frac{1-r^{2n}}{1 - 2r^n \cos \theta_n + r^{2n}}.
\end{equation*}
Simple estimates verify that $\PK_r$ is continuous on $\T^\infty$ for all $r<1.$
Let  $f$ be any positive
continuous function on the infinite dimensional polydisk, and fix an arbitrary sequence $r_k\nearrow 1^-$ as $k\to\infty.$
We may argue by using  the dominated convergence theorem, Fubini's theorem, and finally  Fatou's lemma that
\textcolor{black}{ \begin{eqnarray*}
 \int f \dif \mu &=& \lim_{k\to\infty} \int [\PO_{r_k} f] \dif \mu\; =\; \lim_{k\to\infty}  \int f [\PO_{r_k} \mu] \,\dif \theta   \; \geq \; \int \mu^\ast f \dif \theta.
\end{eqnarray*}}
Suppose now that there exists a set $E \subset \T^\infty$
with strictly positive Lebesgue measure such that $\mu^\ast$ is greater than $\varepsilon>0$ there. By the above, this leads to the inequality
\begin{equation*}
	\int_E  f \dif \theta \leq \varepsilon^{-1}  \int f \dif \mu.
\end{equation*}
Since this holds for all positive continuous functions $f$, we easily get  a contradiction. Indeed, shrinking $E$ slightly, if necessary, we may assume that
it is compact and   $\mu(E) = 0$ (by the inner-regularity of Lebesgue measure). From the outside, we may approximate $E$ in $\mu$-measure by an open set $G$ ($\mu$ is outer regular). By Urysohn's lemma, there is a function $f$ which is $0$ outside of $G$ and $1$ on $E$. This yields a contradiction against the previous inequality.

We have shown that $\lim_{k\to\infty} \PO_{r_k}\mu (e^{i\theta})=0$  for
almost every $e^{i\theta}$  when the limit is taken along any sequence $(r_k)$, 
and we still need to improve this to unconstrained convergence along $r\uparrow 1^-.$ For that we now fix the sequence $r_k:=1-k^{-1/3}$  and observe that it is enough to show that there is a constant $C<\infty$ with  
\begin{equation}\label{eq:stayinlimit}
	\frac{ \PO_r(\theta)}{ \PO_{r_k}(\theta)} \leq C\qquad \textrm{for}\quad r \in (r_k, r_{k+1}) \quad  \textrm{and all}\quad k\geq 1.
\end{equation}
Observe first that
\begin{equation*}
 	\frac{  \PO_{r}(\theta)}{  \PO_{r_k}(\theta)} 
	 =
	 \left( \prod_n \frac{1-r^{2n}}{1-r_k^{2n}} \right) \left( \prod_n \frac{1 - 2r_k^n \cos \theta_n + r_k^{2n}}{1 - 2r^n \cos \theta_n + r^{2n}}\right). 
\end{equation*}
Our aim is to apply the general estimate $\Big|\prod_{n=1}^\infty (1+a_n)\Big|\leq\exp\big(\sum_{n=1}^\infty |a_n|\big).$ To this   end, observe first that
$$
\left| \frac{1-r^{2n}}{1-r_k^{2n}}-1\right|= \frac{r^{2n}-r_k^{2n}}{1-r_k^{2n}}\leq \frac{r_{k+1}^{2n}-r_k^{2n}}{1-r_k}
\leq \frac{2n(r_{k+1}-r_k)r_{k+1}^{n-1}}{(1-r_{k+1})^2}.
$$
In a similar vein,
\begin{align*}
\left| \frac{1 - 2r_k^n \cos \theta_n + r_k^{2n}}{1 - 2r^n \cos \theta_n + r^{2n}}-1\right| &= \frac{|2\cos \theta_n (r_k^n - r^n) + (r^{2n} - r_k^{2n})| }{1 - 2r^n \cos \theta_n + r^{2n}} \\
& \leq \frac{4(r_{k+1}^{n}-r_k^{n})r_{k+1}^{n-1}}{(1-r_{k+1}^{n})^2}
\leq \frac{4n(r_{k+1}-r_k)r_{k+1}^{n-1}}{(1-r_{k+1})^2}.
\end{align*}
Hence the required uniform bound in $r$ follows by noting that
\begin{eqnarray*}
\sum_{n=1}^\infty \left(\frac{n(r_{k+1}-r_k)r_{k+1}^{n-1}}{(1-r_{k+1})^2}\right)
=\frac{(r_{k+1}-r_k)}{(1-r_{k+1})^2}\sum_{n=1}^\infty nr_{k+1}^{n-1}
\leq \frac{r_{k+1}-r_k}{(1-r_{k+1})^4}\leq C
\end{eqnarray*}
holds uniformly in $k$ for our choice of   sequence $(r_k).$
\end{proof}

The proofs of the above theorems apply to a more general radial approach:
\textcolor{black}{
\begin{theorem} \label{co:general}  {\rm Theorems \ref{th:fatou} and \ref{th:infiniterudin}} remain valid for the boundary approaches where  the given boundary point $(z_1,z_2,z_3,\ldots )$ is targeted along the curve
$$
(r^{m_1}z_1,r^{m_2}z_2,r^{m_3}z_3,\ldots )
$$
as  $r\to 1^-$ $($and the definitions of the maximal functions etc. are modified accordingly$)$. Here, $(m_j)$ is any sequence of positive integers that satisfies the condition 
$$
A(r):=\sum_{j=1}^\infty r^{m_j}<\infty \quad \textrm{ for all}\quad r<1.
$$
\end{theorem}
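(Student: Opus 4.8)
The plan is to re-run the proofs of Theorems \ref{th:fatou} and \ref{th:infiniterudin} essentially verbatim, replacing throughout the weight $j$ attached to the $j$-th coordinate by $m_j$. Concretely, in \eqref{eq:mu} one sets $\nu^\ast := (m_1\nu_1, m_2\nu_2, \ldots)$ so that $|\nu^\ast|_1 = \sum_j m_j|\nu_j|$, one defines $\mu_\xi$, $M\mu$ and the product kernel $\mathcal{P}_r(\theta) = \prod_n \tfrac{1 - r^{2m_n}}{1 - 2r^{m_n}\cos\theta_n + r^{2m_n}}$ accordingly, and one uses the rotation $(\e^{\im\theta_j})_j \mapsto (\e^{\im(\theta_j + m_j t)})_j$ in place of $(\e^{\im(\theta_j + jt)})_j$. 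The first thing I would check is that the basic convergence estimate \eqref{eq:mu3} survives: the same computation now gives $\|\mu_\xi\|_\Wi \leq c\prod_{j}\tfrac{1 + |\xi|^{m_j}}{1 - |\xi|^{m_j}}$, and taking logarithms together with $\log\tfrac{1+x}{1-x} \asymp x$ for $x$ bounded away from $1$ shows that this product converges precisely because $\sum_j |\xi|^{m_j} = A(|\xi|) < \infty$. This is exactly the point where the hypothesis enters, and it is what makes all the objects well defined.

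With this in hand, the remaining ingredients of Theorem \ref{th:fatou} are insensitive to the choice of weights. The factorization $A_m\mu_\xi = (P_1(\cdot, \xi^{m_1})\cdots P_m(\cdot, \xi^{m_m}))\ast\mu$ still exhibits $A_m\mu_\xi$ as the convolution of $\mu$ with a product of one-dimensional Poisson kernels, so part (i) is unchanged; the map $(\e^{\im\theta_j})_j \mapsto (\e^{\im(\theta_j + m_jt)})_j$ is still a measure-preserving homeomorphism of $\T^\infty$, so the Fubini computation and the appeals to the one-dimensional weak-type and Fefferman--Stein inequalities in parts (ii)--(iv) go through word for word (and the harmonicity criterion of Remark \ref{remark1} is unaffected). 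Likewise, in Theorem \ref{th:infiniterudin} the kernel $\mathcal{P}_r$ is continuous on $\T^\infty$ for each $r<1$ for the same reason as above (the deviation of the $n$-th factor from $1$ is $O(r^{m_n}/(1-r))$ and $\sum_n r^{m_n}=A(r)<\infty$), and the core of the argument — testing against positive continuous $f$, the dominated convergence/Fubini/Fatou chain, and the contradiction via inner/outer regularity and Urysohn's lemma — does not see the weights at all.

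The one genuinely new point, and the step I expect to be the main obstacle, is the passage from convergence along a fixed sequence $r_k \nearrow 1$ to unrestricted radial convergence, that is, the uniform bound \eqref{eq:stayinlimit}. Here the explicit choice $r_k = 1 - k^{-1/3}$ no longer suffices, because the relevant sums now produce the factor $\sum_n m_n r_{k+1}^{m_n-1} = A'(r_{k+1})$, whose growth as $r \to 1$ is governed by the arbitrary sequence $(m_j)$. Repeating the two factor estimates with $m_n$ in place of $n$, and bounding denominators by $1 - r_{k+1}^{m_n} \geq 1 - r_{k+1}$ and $1 - r_k^{m_n} \geq 1 - r_k$ (both valid since $m_n\geq1$), one finds that \eqref{eq:stayinlimit} reduces to the single requirement
\begin{equation*}
(r_{k+1} - r_k)\,F(r_{k+1}) \leq C, \qquad F(r) := \frac{A'(r)}{(1-r)^2},
\end{equation*}
uniformly in $k$, where the two sums are absorbed into the right-hand side using $\tfrac{1}{1-r_k}\leq\tfrac{1}{(1-r_{k+1})^2}$. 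So the task becomes to produce a sequence $r_k \nearrow 1$ obeying this constraint for an arbitrary admissible $(m_j)$.

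I would handle this by a greedy construction adapted to $(m_j)$ rather than by an explicit formula. Term-by-term differentiation of the convergent series $A(r) = \sum_n r^{m_n}$ shows that $A'$ is finite, continuous, positive and nondecreasing on $[0,1)$ with $A'(r) \to \infty$ as $r \to 1^-$ (since $A$ is convex, increasing and unbounded); hence $F$ is continuous, strictly increasing and tends to $\infty$ at $1$. For fixed $r_k$ the function $r \mapsto (r - r_k)F(r)$ vanishes at $r_k$, is continuous and strictly increasing, and tends to $\infty$ as $r \to 1^-$, so there is a unique $r_{k+1} \in (r_k, 1)$ with $(r_{k+1} - r_k)F(r_{k+1}) = 1$; this makes the displayed constraint hold with $C$ the constant from the factor estimates. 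It then remains only to see that $r_k \to 1$: if instead $r_k \uparrow r^\ast < 1$, then $F(r_{k+1}) \leq F(r^\ast) < \infty$ would force $r_{k+1} - r_k = 1/F(r_{k+1}) \geq 1/F(r^\ast) > 0$ for every $k$, contradicting convergence. With the sequence so chosen, \eqref{eq:stayinlimit} holds, and the remainder of the proof of Theorem \ref{th:infiniterudin} applies unchanged.
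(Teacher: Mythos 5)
Your proposal is correct and follows essentially the same route as the paper: the hypothesis $A(r)<\infty$ is used precisely to generalise estimate \eqref{eq:mu3} (the rest of Theorem \ref{th:fatou} being weight-insensitive), and Theorem \ref{th:infiniterudin} is reduced to producing a sequence $(r_k)\nearrow 1$ with $(r_{k+1}-r_k)\,A'(r_{k+1})/(1-r_{k+1})^2$ bounded, which is exactly the paper's condition \eqref{eq:enough}. The only difference is in how that sequence is built: you solve $(r_{k+1}-r_k)F(r_{k+1})=1$ by the intermediate value theorem and rule out $r_k\uparrow r^*<1$ by contradiction, whereas the paper uses an explicit midpoint-and-subdivision induction guaranteeing $1-r_k\to 0$ geometrically; both constructions are valid.
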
}
\begin{proof}For Theorem \ref{th:fatou}, this is easy as one observes that the above condition is just what is needed in order to generalise estimate \eqref{eq:mu3}. Actually, it is of interest to note that the same condition is obtained by requiring that for any  point $(z_j)_{j=1}^\infty\in\T^\infty$ one has $(r^{m_j}z_j)\in \ell^2$ for all $r\in(0,1)$, or equivalently that  $(r^{m_j}z_j)\in \ell^1$ for all $r\in (0,1)$. 

In the case of Theorem \ref{th:infiniterudin}, the crucial detail we need to verify is that one still may pick a subsequence $(r_k)$ that increases to $1^-$ in such a way that \eqref{eq:stayinlimit} holds uniformly in $k$. By our previous estimates it is enough to have
$
\displaystyle \sum_{j=1}^\infty m_j\frac{r_{k+1}-r_k}{(1-r_{k+1})^2}r_{k+1}^{m_j-1}\leq 1$, or in other words
\begin{equation}\label{eq:enough}
r_{k+1}-r_k\leq \frac{(1-r_{k+1})^2}{A'(r_{k+1})}\qquad  \textrm{for all}\quad  k\geq 1.
\end{equation}
Note that $A'(r)<\infty $ for $r\in (0,1)$ due to the analyticity of $A.$
We may pick $(r_k)$ inductively  as follows: choose $r_1=1/2$ and select $r_2>r_1$ so that  \eqref{eq:enough} is satisfied for $k=1$. Assume then by induction that  $r_n$, $n\geq 2$ is already chosen so that  \eqref{eq:enough} is satisfied for $k=1,\ldots , n-1$. Let $r'_{n+1}:=(1+r_n)/2$. Denote $b:=(1-r'_{n+1})^2/A'(r'_{n+1})$. If $r'_{n+1}-r_n\leq b$ choose $r_{n+1}=r'_{n+1}$. Otherwise,
set $\ell:=\lfloor (r'_{n+1}-r_n)/b\rfloor$ and set $r_{n+j}=r_n+jb$ for $j=1,\ldots, \ell$. Then, obviously \eqref{eq:enough} holds for
$k=n,\ldots, n+j-1$. Moreover, we have $1-r_{n+j}\leq 3(1-r_{n})$/4 so the inductive construction also  makes sure that $\lim_{k\to\infty}r_k=1$.
\end{proof}

\begin{remark}\label{re:generalizedHelson} Still, another possibility for studying Fatou type theorems in the spirit of the above approach is to use 
the path
$$
(e^{-\lambda_1u}z_1,e^{-\lambda_2u}z_2,e^{-\lambda_3u}z_3,\ldots ):=g_u(z)
$$
where $u\to 0^+.$
This time  one demands that $\sum_{n=1}^\infty e^{-\lambda_nu}<\infty$ for any $u>0,$ and for fixed  $z\in \T^\infty$ one considers 
the harmonic (or analytic) functions $\xi\mapsto  g_\xi(z)$ in the upper half space. In this approach, one has some additional complications stemming from the infinite (Lebesgue) measure of $\R,$ and this is why  above we preferred to work employing an auxiliary parameter $\xi\in\D$ instead.
\end{remark}

\begin{remark}\label{re:rudin}
For harmonic analysis on $\T^d$ in the finite-dimensional case $d<\infty$, the most fundamental  approach to the boundary is the standard radial one. Accordingly, we denote the corresponding boundary value function (at boundary points where the radial boundary value exists) by 
$$
\mu^{**}(\e^{i\theta}):=\lim_{r\to 1^{-}}\mu (re^{i\theta_1},re^{i\theta_2},\ldots , re^{i\theta_d}).
$$
The proofs of both Theorem  \ref{th:fatou} and \ref{th:infiniterudin} obviously work unchanged (and actually simplify considerably) \textcolor{black}{for the   function $\mu^{**}.$} Hence, we obtain:
\begin{corollary} \label{co:rudin} 
 Let $d\mu=fdm+\dif \mu_s$ be a measure on $\T^d$, for $d<\infty$,    where both $f \in L^1(\T^\infty)$ and the singular part $\mu_s$ have Fourier spectrum supported on $\N^\infty_0 \cup (-\N^\infty_0)$. Then  for almost every $e^{it}\in \T^{d}$, it holds that 
$$
\mu^{**}(e^{it})=f(e^{it}).
$$
\end{corollary}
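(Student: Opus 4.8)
The plan is to re-run the proofs of Theorems~\ref{th:fatou} and~\ref{th:infiniterudin} essentially verbatim, changing only the three ingredients dictated by the unrestricted radial approach behind $\mu^{**}$. First, the product kernel of~\eqref{eq:mu}--\eqref{eq:mu3} is replaced by the ordinary $d$-fold Poisson kernel
\begin{equation*}
P_r(\theta)=\prod_{n=1}^{d}\frac{1-r^2}{1-2r\cos\theta_n+r^2}.
\end{equation*}
Second, the measure-preserving homeomorphism becomes the diagonal rotation $(\e^{\im\theta_1},\ldots,\e^{\im\theta_d})\mapsto(\e^{\im(\theta_1+t)},\ldots,\e^{\im(\theta_d+t)})$. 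Third, to each boundary point I would attach the one-variable function
\begin{equation*}
g_\theta(\xi)=\sum_{\nu\in\N^d_0}\widehat\mu(\nu)\,\xi^{|\nu|_1}\e^{\im\nu\cdot\theta}+\sum_{\nu\in(-\N^d_0)\setminus\{0\}}\widehat\mu(\nu)\,\overline{\xi}^{\,|\nu|_1}\e^{\im\nu\cdot\theta},\qquad\xi\in\D,
\end{equation*}
which is harmonic on $\D$ \emph{exactly because} the spectrum of $\mu$ lies in $\N^d_0\cup(-\N^d_0)$ (the analytic and anti-analytic parts separate), and which satisfies $g_\theta(r)=\mu(r\e^{\im\theta_1},\ldots,r\e^{\im\theta_d})$ and, more generally, $g_\theta(r\e^{\im t})=\mu(r\e^{\im(\theta_1+t)},\ldots,r\e^{\im(\theta_d+t)})$. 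This is the precise analogue of the function $g_\theta$ used in the proof of Theorem~\ref{th:fatou}(ii).

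With these substitutions the proof of Theorem~\ref{th:fatou}(ii) carries over: the series estimate~\eqref{eq:mu3} collapses to a \emph{finite} product and is trivially finite, so for each fixed $\e^{\im\theta}$ the function $g_\theta$ is harmonic in $\xi$; Fubini against the diagonal rotation then identifies the $d$-dimensional radial maximal function with the one-dimensional radial maximal function of $g_\theta$, and the classical one-variable weak-type $(1,1)$ inequality yields $\int_{\T^d}\|g_\theta\|_{\TV}\,\dif\theta\le\|\mu\|_{\TV}$, hence the bound~\eqref{eq:weak1} and the a.e.\ existence of $\mu^{**}$. Part (iii) transfers identically, so that, writing $\dif\mu=f\,\dif m+\dif\mu_s$, the absolutely continuous part satisfies $(f\,\dif m)^{**}=f$ almost everywhere. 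I would emphasise that this is the only step where the spectrum hypothesis is genuinely used: passing through the harmonic function $g_\theta$ is precisely what rescues the argument from the failure of strong differentiation in several variables, which is why the statement would be false for a general $f\in L^1(\T^d)$ without the spectral restriction.

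The Corollary is thereby reduced to showing $\mu_s^{**}=0$ a.e.\ for the singular part, for which I would copy the duality argument of Theorem~\ref{th:infiniterudin}: after reducing to a positive $\mu_s$, one uses, for every positive continuous $f$,
\begin{equation*}
\int f\,\dif\mu_s=\lim_{k\to\infty}\int (P_{r_k}f)\,\dif\mu_s=\lim_{k\to\infty}\int f\,(P_{r_k}\mu_s)\,\dif\theta\ge\int f\,\big(\liminf_{k\to\infty}P_{r_k}\mu_s\big)\,\dif\theta
\end{equation*}
(by dominated convergence, the symmetry of $P_r$, and Fatou's lemma), followed by the Urysohn/regularity contradiction exploiting that $\mu_s$ charges a Lebesgue-null set. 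The step I anticipated as the main obstacle is the passage from convergence along a sequence to the unconstrained limit $r\to1^-$, which in infinite dimensions forced the delicate choice $r_k=1-k^{-1/3}$ together with the ratio bound~\eqref{eq:stayinlimit}. In the present finite-dimensional setting this obstacle evaporates: $P_r/P_{r_k}$ is now a \emph{finite} product of one-variable Poisson-kernel ratios, so~\eqref{eq:stayinlimit} holds uniformly with no special choice of sequence, and the sequential vanishing upgrades at once to $\mu_s^{**}=0$ a.e.\ (this is the case $m_j\equiv 1$ of Theorem~\ref{co:general} with only finitely many active coordinates, where the summability hypothesis is automatic). Thus, as already indicated in Remark~\ref{re:rudin}, the whole argument not only survives but simplifies, the sole load-bearing ingredient being the spectrum-driven reduction to one variable carried out above.
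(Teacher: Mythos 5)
Your proposal is correct and follows essentially the same route as the paper, whose own proof of Corollary \ref{co:rudin} is precisely the observation in Remark \ref{re:rudin} that the proofs of Theorems \ref{th:fatou} and \ref{th:infiniterudin} work unchanged (and simplify) for the equal-radius approach; your three substitutions --- the finite product Poisson kernel, the diagonal rotation, and the harmonic function $g_\theta(\xi)$ splitting into analytic and anti-analytic parts --- are exactly what that remark implicitly requires. One small correction: even for $d<\infty$ the bound \eqref{eq:stayinlimit} does \emph{not} hold uniformly for an arbitrary sequence (already for $d=1$ the ratio at $\theta=0$ is of order $(1-r_k)/(1-r)$, which blows up for sparse sequences), so you still need a doubling-type choice such as $r_k=1-2^{-k}$ or $r_k=1-1/k$; since the argument only requires exhibiting one such sequence, this imprecision does not affect the validity of your proof.
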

\noindent 
This result is a special case of a the classical theorem by  Marcinkiewicz and Zygmund which holds for all measures on $\T^d$ (see \cite[Theorem 2.3.1]{rudin1969}). Although less general, Corollary \ref{co:rudin} has the advantage of having a nearly trivial proof, while the proof given by Rudin in the reference, which is given explicitly in the case $d=2$, uses a fairly delicate higher-dimensional covering argument.  We also remark that in  the finite dimensional situation,  one knows basically by a standard application of iterated one-dimensional maximal functions that at almost every point the unrestricted radial approach works for boundary functions $f\in  {LLog}^{d-1}(\T^d)$,  see \cite[Chapter 17]{zygmund1968}. 
\end{remark}

We  end this section by observing that Corollary \ref{co:riesz2} holds for a larger class of Hardy spaces, even when $p < 1$. In the classical one-variable $H^p$ theory, this is proved for $p<1$ using inner-outer factorization. As this  tool is not available  in the setting of several variables, a different approach is needed.

Explicitly,  for $p\in (0,1)$, as is usual, the space $L^p(\T^\infty)$ consists of the measurable functions on $\T^\infty$ for which $\int_{\T^\infty} \abs{f}^p \dif \theta$ is finite. It is well known that this is a quasi-Banach space. We define $H^p_{\text{big}}(\T^\infty)$ to be the closure in  $L^p(\T^\infty)$ of those polynomials on $\T^\infty$ for which the Fourier coefficients are supported on $\nu \in \Z^\infty_0$ such that $\sum  \nu_n \geq 0$. As part of the proof of the following corollary, we also verify  that   point evaluation at $0$ is well-defined for these larger Hardy spaces \textcolor{black}{(recall that, as was mentioned in the introduction, that it was shown in  \cite{cole_gamelin1986} that the spaces $H^p(\T^\infty)$ have bounded point evaluations on the set  $\D^\infty \cap \ell^2$).}

\begin{corollary} \label{corr-big}
 Let $p>0$ and assume that ${f}(0) \neq 0$. Then,  $f \in H^p_{\mathrm{big}}(\T^\infty)$ implies $\log \abs{f} \in L^1(\T^\infty)$.
\end{corollary}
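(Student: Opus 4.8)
The plan is to follow the template of Corollary \ref{co:riesz2}, reducing the statement to a one complex variable assertion, the only genuinely new feature being that for $p<1$ the space $L^p(\T^\infty)$ is merely a quasi-Banach space. In particular inner--outer factorisation is unavailable, and the elementary bound $\abs{f(0)}\le\norm{f}_{L^1}$ that made point evaluation automatic when $p\ge1$ now fails. Throughout I would fix polynomials $P_k$, with spectra in the cone defining $H^p_{\mathrm{big}}$, converging to $f$ in $L^p(\T^\infty)$, and for each $\e^{\im\theta}\in\T^\infty$ consider the one variable slice $g_\theta(\xi):=f_\xi(\e^{\im\theta})$ coming from the approach \eqref{eq:app}. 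The spectral condition is precisely what makes each $\xi\mapsto g_\theta(\xi)$ analytic on $\D$ rather than merely harmonic; this is the analogue of Remark \ref{remark1}, and is the only place the enlarged notion of analyticity enters.

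The first step is to settle point evaluation at the origin. For a polynomial $P$ in the relevant cone the slice $\xi\mapsto P_\xi(\e^{\im\theta})$ is analytic, so $\xi\mapsto\abs{P_\xi(\e^{\im\theta})}^p$ is subharmonic, and the subharmonic mean value inequality at the centre gives $\abs{P(0)}^p\le\frac1{2\pi}\int_\T\abs{P_{\e^{\im t}}(\e^{\im\theta})}^p\,\dif t$ for every $\e^{\im\theta}$. Because the rotation $\e^{\im\theta}\mapsto(\e^{\im(\theta_1+t)},\e^{\im(\theta_2+t)},\dots)$ is measure preserving on $\T^\infty$ for each fixed $t$, integrating in $\theta$ and applying Fubini yields $\abs{P(0)}^p\le\norm{P}_{L^p(\T^\infty)}^p$. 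Hence $P\mapsto P(0)$ is continuous for the $L^p$ quasi-norm and extends to a well-defined functional on $H^p_{\mathrm{big}}(\T^\infty)$, so that $f(0)$ makes sense and $g_\theta(0)=f(0)$ for almost every $\theta$. This subharmonic estimate is exactly what replaces the trivial $L^1$ bound available for $p\ge1$.

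With this in hand, log-integrability follows as in Corollary \ref{co:riesz2}. For almost every $\theta$ the slice $g_\theta$ lies in $H^p(\T)$ with $g_\theta(0)=f(0)\neq0$, and its boundary values recover $f$ along the orbit of $\e^{\im\theta}$. The one variable theory supplies two inequalities: concavity of the logarithm gives $\frac1{2\pi}\int_\T\log\abs{g_\theta}\,\dif t\le\frac1p\log\big(\frac1{2\pi}\int_\T\abs{g_\theta}^p\,\dif t\big)=\frac1p\log\norm{g_\theta}_{H^p(\T)}^p$, controlling the positive part, while superharmonicity of $\log(1/\abs{g_\theta})$ as in \eqref{eq:sh} gives the lower bound $\frac1{2\pi}\int_\T\log\abs{g_\theta}\,\dif t\ge\log\abs{g_\theta(0)}=\log\abs{f(0)}$. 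Integrating both over $\T^\infty$, using the measure preserving change of variables so that $\int_{\T^\infty}\norm{g_\theta}_{H^p(\T)}^p\,\dif\theta=\norm{f}_{L^p(\T^\infty)}^p$, and pulling the logarithm outside the outer integral by Jensen, I would obtain
\[
\log\abs{f(0)}\;\le\;\int_{\T^\infty}\log\abs{f}\,\dif\theta\;\le\;\tfrac1p\log\norm{f}_{L^p(\T^\infty)}^p\;<\;\infty,
\]
which bounds $\int\log^+\abs{f}$ and, together with the finite lower bound, forces $\int\log^-\abs{f}<\infty$, i.e. $\log\abs{f}\in L^1(\T^\infty)$. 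When $f(0)=0$ one reduces to this case by composing with Möbius maps in finitely many variables exactly as in the proof of Corollary \ref{co:riesz2}.

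The hard part is the point evaluation step in the regime $p<1$. The functional $P\mapsto P(0)$ is discontinuous on all of $L^p(\T^\infty)$, so its boundedness must be wrung out of analyticity alone, and one must be certain that the centre value of the slice $g_\theta$ is genuinely the single constant $f(0)$ and not some $\theta$-dependent spectral projection; concretely, the approach in \eqref{eq:app} has to be arranged so that the spectral origin $\nu=0$ is the only index sent to the $\xi$-constant term, which is the delicate feature distinguishing $H^p_{\mathrm{big}}$ from the genuine Hardy space. A secondary technical point is the passage to the boundary in the one variable mean value inequalities, which requires the almost everywhere existence of the radial limits of $g_\theta$; this is supplied either by classical one dimensional $H^p$ theory or directly by Theorem \ref{th:fatou}.
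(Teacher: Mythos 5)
Your overall template --- slicing to one complex variable, a subharmonic mean--value inequality to make evaluation at $0$ bounded, then superharmonicity of $\log(1/\abs{g})$ together with $\log(1/x)\geq -x^p/p$, and the M\"obius reduction when $f(0)=0$ --- is the same as the paper's. But there is a genuine gap at the very first step: you use the wrong slice. You take $g_\theta(\xi)=f_\xi(\e^{\im\theta})$ from \eqref{eq:app}, i.e.\ the substitution $(\xi\e^{\im\theta_1},\xi^2\e^{\im\theta_2},\ldots)$, and claim that the spectral condition $\sum_n\nu_n\geq 0$ defining $H^p_{\mathrm{big}}$ makes these slices analytic in $\xi$. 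It does not. Under \eqref{eq:app}/\eqref{eq:mu} the frequency $\nu$ contributes $r^{\sum_n n\abs{\nu_n}}\e^{\im t\sum_n n\nu_n}$, so the slice is analytic in $\xi=r\e^{\im t}$ only when $\nu\in\N^\infty_0$, and harmonic only when $\nu\in\N^\infty_0\cup(-\N^\infty_0)$; this is exactly the content of Remark \ref{remark1}. The big cone is much larger: for instance $z_1\bar z_2$ is an admissible polynomial for $H^p_{\mathrm{big}}$ (since $1-1=0\geq 0$), yet its slice is $\xi\bar\xi^{2}\e^{\im(\theta_1-\theta_2)}=r^3\e^{-\im t}\e^{\im(\theta_1-\theta_2)}$, which is neither analytic nor harmonic in $\xi$. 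Consequently every one--variable ingredient you invoke --- subharmonicity of $\abs{P_\xi}^p$, membership $g_\theta\in H^p(\T)$, superharmonicity of $\log(1/\abs{g_\theta})$, and even the identification of the boundary values of $g_\theta$ with $f$ along an orbit (Theorem \ref{th:fatou}(iii) also requires the restricted spectrum) --- is unavailable, and the argument collapses at its foundation. A symptom of the confusion appears in your Fubini step: you describe the boundary restriction of \eqref{eq:app} as the rotation $(\theta_1+t,\theta_2+t,\ldots)$, but that restriction is actually $(\theta_1+t,\theta_2+2t,\ldots)$; the two flows are genuinely different, and only one of them is compatible with the big cone.

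The paper's proof uses precisely the diagonal flow you wrote down by accident: with $\tilde t=(t,t,\ldots)$ it considers $\widetilde f_\theta(\e^{\im t}):=f(\e^{\im(\theta+\tilde t)})$, warning explicitly that this differs from the earlier $f_\theta$. Under the diagonal flow the monomial $\e^{\im\nu\cdot\theta}$ acquires the single one--variable frequency $\sum_n\nu_n$, so the condition $\sum_n\nu_n\geq 0$ is \emph{exactly} analyticity of the slices; one then gets $\widetilde f_\theta\in H^p(\T)$ for a.e.\ $\theta$ by $L^p$--approximating $f$ with big--cone polynomials, whose diagonal slices are one--variable analytic polynomials. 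Note that everything happens at the level of boundary functions, with no interior evaluation of $f$, which is essential because the diagonal path $(\xi\e^{\im\theta_1},\xi\e^{\im\theta_2},\ldots)$ does not stay in $\ell^2\cap\D^\infty$. With this replacement, your two inequalities and the concluding Jensen/Fubini chain do go through essentially as you wrote them. Finally, the issue you flag as ``the hard part'' is real, but it cuts in the opposite direction from what you suggest: under \eqref{eq:app} the origin $\nu=0$ is automatically the only frequency sent to the $\xi$--constant term (there the problem is the failure of analyticity), whereas under the paper's diagonal slice analyticity holds but every frequency on the hyperplane $\sum_n\nu_n=0$ contributes to the constant term $\widetilde P_\theta(0)$; the paper's identity $\widetilde P_\theta(0)=P(0)$, asserted for big--cone polynomials, is precisely the point where this delicacy is absorbed (it is clear, e.g., when the spectrum meets that hyperplane only at $\nu=0$). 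So your instinct about where the subtlety lies is sound, but your proposed mechanism neither establishes analyticity of the slices nor resolves that identification.
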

\begin{proof}
	Let $f \in H^p_{\text{big}}(\T^\infty).$  By definition, we may pick    a sequence of analytic (in the wider sense described above) polynomials $(P_n)$ so that 
	$\norm{P_n - f}_p \rightarrow 0$ as $n \rightarrow \infty$. For $t\in [0,2\pi)$, we use the notation $\tilde t:=(t,t,\ldots)\in [0,2\pi)^\infty .$ Invoking the measure preserving change of variables $\theta\to \theta +\tilde t$, we obtain
	\begin{equation*}
		\int_{\T^\infty} \abs{P_n(\e^{\im \theta}) - f(\e^{\im \theta})}^p \dif \theta = 
		\int_{\T^\infty} \int_\T \abs{P_n(\e^{\im (\theta+\tilde t)}) - f( \e^{\im (\theta + \tilde t)})}^p  \frac{\dif t}{2\pi} \dif \theta.
	\end{equation*}
	Hence, passing to a subsequence if necessary, a standard argument shows that  for almost every $\theta$, the       one variable polynomial $e^{it}  \longmapsto P_n(\e^{\im (\theta+\tilde t)})$ converges to the function  $\widetilde f_\theta:t \longmapsto f(\e^{\im(\theta+t)})$ in the space $L^p(\T)$
	(observe that the definition of $\widetilde f_\theta$ differs from that of
	$f_\theta$ we applied before).
	Since 
	\begin{equation*}
		P_n(\e^{\im (\theta + \tilde t)}) = \sum_{\text{finite}} a_\nu \e^{\im \nu \cdot \theta} \e^{\im t \sum \nu_n},
	\end{equation*}
	the polynomials $e^{i\theta}  \longmapsto P_n(\e^{\im (\theta+\tilde t)})$ are analytic.	It follows that  $\widetilde f_\theta\in H^p(\T)$ for almost all $\theta$.
If $P\in  H^p_{\mathrm{big}}(\T^\infty)$  is a polynomial, we  observe
that $\widetilde P_\theta (0)=P(0)$ for all $\theta,$ and hence by integrating the one dimensional estimate $|\widetilde P_\theta (0)|^p\leq \int_0^{2\pi}
|\widetilde P_\theta (e^{it})|^p\, dt/2\pi$ over $\theta,$ it follows that
$$
|P(0)|^p\leq C\int_{\T^\infty} |P|^p \dif \theta,
$$
whence  the point evaluation at $0$ is well-defined and bounded on
$ H^p_{\mathrm{big}}(\T^\infty).$ In particular, one has $\widetilde f_\theta(0)=f(0)$ for almost every $\theta\in T^\infty$.

	The same argument that was used to prove Corollary \ref{co:riesz2}, now holds as soon as the relation \eqref{eq:sh} is suitably modified
	using the elementary inequality $\log 1/x \geq - x^p/p$. 
\end{proof}

\section{Examples and open questions} \label{se:examples}

Above, we obtained  positive results for  {\CR specialized} radial approaches in the infinite dimensional situation. As we noted above after Corollary \ref{co:rudin}, in finite dimensions at almost every point the unrestricted radial approach works for functions in $H^p(\T^N)$ with $p>1$. {\CR The content of the following theorem is that this is far from being true in the infinite dimensional case.}

\begin{theorem} \label{th:example} There exists an analytic function $f\in H^\infty(\T^\infty)$, without zeroes,    that fails to have an unrestricted radial limit at almost every boundary point. In fact, $f$ has the following properties:

\smallskip

\noindent{\bf (i)}\quad For almost every point $e^{i\theta}\in\T^{\infty}$, there is a radial approach that is coordinatewise increasing in the sense that for each $n\geq 1$ we have
$r_{n,k}\nearrow 1^-$  as $k\to\infty$  for all $n\geq 1$,  but  under which $f$ fails to converge to the right value $f(e^{i\theta})$. Actually, in this boundary approach one has
$$
\lim_{k\to\infty} |f(r_{1,k}e^{i\theta_1},r_{2,k}e^{i\theta_2},\ldots)| \; =\; 0 \qquad\textrm{for a.e.}\;\; e^{i\theta}\in\T^{\infty}.
$$

\smallskip

\noindent{\bf (ii)}\quad There is a radial approach that is independent of the boundary point, but  under which $f$ fails to converge  to the right value $f(e^{i\theta})$ at almost every boundary point $e^{i\theta}\in\T^{\infty}$.

\smallskip

\end{theorem}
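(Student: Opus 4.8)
The plan is to build a single zero-free function as an infinite product $f(z)=\prod_{n\ge1}\phi_n(z_n)$, where each $\phi_n=\exp(h_n)$ is a zero-free factor with $\norm{\phi_n}_\infty\le1$ whose boundary modulus has a deep, narrow dip: I would take $\log\abs{\phi_n(\e^{\im t})}=u_n(t):=-L_n\1_{I_n}(t)$ for an arc $I_n$ of length $2\pi\eps_n$, with $h_n$ the Herglotz transform of $u_n$, so that $\log\abs{\phi_n}$ is the Poisson extension $P[u_n]\le0$. Writing $m_n:=L_n\eps_n$ for the mean deficit $-\int u_n$, I would fix parameters with $\sum_n m_n<\infty$ (so $f(0)=\exp(-\sum_n m_n)\neq0$ and $f$ is zero-free), $\sum_n\eps_n<\infty$ (so $\prod_n\norm{\phi_n}_{L^2(\T)}>0$ and the partial products converge in $L^2(\T^\infty)$ to a genuine $f\in H^\infty(\T^\infty)$ with $\norm f_\infty\le1$), but $\sum_n m_n\log(1/m_n)=\infty$; the choice $m_n\asymp 1/(n\log^2 n)$, $\eps_n=m_n^2$, $L_n=m_n/\eps_n$ works. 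Since $f$ is a nonzero $H^\infty$ function, Corollary \ref{co:riesz2} gives $\log\abs f\in L^1$, hence $f\neq0$ a.e.; thus any approach along which $\abs f\to0$ automatically fails to recover the correct boundary value.

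The decisive structural point is that the coordinates $\theta_n$ are i.i.d.\ uniform under Haar measure, so along any radial approach $(r_{n,k}\e^{\im\theta_n})_n$ one has $\log\abs{f}=\sum_n P_{r_{n,k}}[u_n](\theta_n)$, a sum of independent functions of the single coordinates. In particular $\int_\T P_r[u_n]=\int_\T u_n=-m_n$, so each coordinate contributes the same mean irrespective of the radius, and since $\sum_n\eps_n<\infty$ the events $\{\theta_n\in I_n\}$ occur finitely often a.s.\ (Borel--Cantelli), whence the genuine boundary series $\log\abs{f(\e^{\im\theta})}=\sum_n u_n(\theta_n)$ converges to a finite value for a.e.\ $\theta$. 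All of the boundary analysis thus reduces to one-dimensional Poisson estimates for the dip together with the elementary fact $\sup_r P_r(\delta)\asymp 1/\delta$, giving $D_n(\theta_n):=\sup_{r<1}\big(-P_r[u_n](\theta_n)\big)\asymp L_n\min\!\big(1,\eps_n/\operatorname{dist}(\theta_n,I_n)\big)$.

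For part (i) I would prove $\sum_n D_n(\theta_n)=\infty$ for a.e.\ $\theta$. Since the $D_n$ are independent and nonnegative, the three-series criterion reduces this to $\sum_n\E[D_n\wedge1]=\infty$; the estimate above gives $\E[D_n\wedge1]\asymp m_n\log(1/m_n)$, which diverges by the choice of $m_n$. Given a point $\theta$ with $\sum_n D_n(\theta_n)=\infty$, I would group the coordinates into consecutive blocks, each realizing a large cumulative dip, and let the $k$-th stage sit the $k$-th block at its dip-achieving radii while pushing all earlier coordinates toward $1$ and keeping later ones near $0$; as $k\to\infty$ each $r_{n,k}\nearrow1$, the point stays in $c_0\cap\D^\infty$, the contribution of the already-processed coordinates remains bounded (it converges to the finite boundary value), and the active block drives $\log\abs f\to-\infty$, i.e.\ $\abs f\to0$.

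Part (ii) is the genuine obstacle, since the radii must now be chosen once and for all, independently of $\theta$. Here the mean identity $\int_\T P_r[u_n]=-m_n$ forbids $\abs f\to0$ on average, so I would instead produce a persistent discrepancy between the limit along the approach and the true boundary value. The plan is a ``sweeping block'' schedule: partition $\N$ into blocks $B_j$ whose dip-radius discrepancy variances $\sum_{n\in B_j}\var\big(P_{r_n^\ast}[u_n]-u_n\big)$ are each of order one (possible since these are $\asymp L_nm_n$ and sum to $+\infty$), and run the parameter so that at the $j$-th checkpoint exactly $B_j$ sits at its dip radii while earlier blocks have been pushed so close to $1$ that their total leftover discrepancy is negligible and later blocks sit near $0$. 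Because distinct blocks involve disjoint coordinates, the checkpoint discrepancies $\Delta_j(\theta)=\sum_{n\in B_j}\big(P_{r_n^\ast}[u_n](\theta_n)-u_n(\theta_n)\big)$ are independent, mean zero, and of order-one variance, so an anti-concentration (Paley--Zygmund type) bound should give $\prob(\abs{\Delta_j}\ge c)\ge p>0$ with $\sum_j p=\infty$; the divergent Borel--Cantelli lemma then forces $\abs{\Delta_j}\ge c$ for infinitely many $j$, a.e.\ $\theta$, so the approach cannot converge to $f(\e^{\im\theta})$. The two points I expect to need the most care are (a) scheduling the radii so that the cumulative contribution of the already-faded blocks genuinely tends to zero while every coordinate still increases to $1$ and each stage stays in $c_0$, and (b) the anti-concentration step, where the summands are not uniformly bounded (the in-arc values reach $L_n$), so I would truncate to the near-arc-but-outside régime and control fourth moments before invoking Paley--Zygmund.
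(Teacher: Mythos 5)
Your construction and your argument for part (i) are essentially those of the paper: the paper also takes $f=\prod_n \exp\big(-u_n(z_n)-\im\,\widetilde u_n(z_n)\big)$ with independent one-variable bumps whose means are summable while the expected radial maximal functions satisfy $\int_\T Mu_n \gtrsim \delta_n\log (1/\delta_n)$ with divergent sum, and then combines Kallenberg's Lemma 3.14 with exactly your block schedule. The only substantive difference in part (i) is cosmetic: the paper's bumps have height $1$ and width $\delta_n$, whereas yours are deep narrow dips of depth $L_n=1/m_n$; both work there.

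Part (ii), however, contains a fatal gap, and it is structural rather than a matter of technique. Since $u_n\le 0$ and $P_r[u_n]\le 0$ have the same mean $-m_n$ for \emph{every} fixed radius $r$, you have
$$
\E\abs{P_{r_n^\ast}[u_n]-u_n}\;\le\; \E\abs{P_{r_n^\ast}[u_n]}+\E\abs{u_n}\;=\;2m_n .
$$
Because your blocks are disjoint, $\sum_j \E\abs{\Delta_j}\le 2\sum_n m_n<\infty$, so Markov's inequality plus the convergence half of Borel--Cantelli give $\Delta_j\to 0$ almost surely --- the exact opposite of the claimed $\prob(\abs{\Delta_j}\ge c)\ge p>0$ along a divergent series. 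The order-one variances you compute are genuine but unusable: they are carried entirely by the events $\{\theta_n\in I_n\}$ (probability $\eps_n$, value $\approx L_n$), which occur only finitely often a.s.\ precisely because you imposed $\sum_n\eps_n<\infty$; after your proposed truncation to the near-arc r\'egime the variances drop to $O(m_n)$ per coordinate, again summable, so Paley--Zygmund produces summable probabilities and the divergent Borel--Cantelli lemma never applies. Nor can the parameters be repaired, since $\sum_n m_n<\infty$ is forced by the requirement that the product converge to a function not identically zero. In fact, for your $f$ and \emph{any} approach $\mathbf{r}_k$ with radii independent of $\theta$, dominated convergence in $n$ (dominant $2m_n$) yields $\E\abs{\log\abs{f(\mathbf r_k\e^{\im\theta})}-\log\abs{f(\e^{\im\theta})}}\to 0$, so the modulus converges in $L^1(\T^\infty)$ to the correct boundary value along every $\theta$-independent approach: no modulus-based scheme of your type can ever witness part (ii).

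This is exactly why the paper abandons the modulus for part (ii) and works with the argument of $f$ instead. The conjugate functions $\widetilde u_n$ have mean zero, hence are not constrained by the mass identity, and $\int_\T\abs{\widetilde u_n}\gtrsim\delta_n\log(1/\delta_n)$, whose sum diverges even though $\sum_n\delta_n<\infty$. Because the paper's bumps are smooth and of height $1$, the $\widetilde u_n$ are moreover uniformly bounded, so Borel--Cantelli gives a.e.\ large block sums of $\abs{\widetilde u_n(\e^{\im\theta_n})}$, and the tuple-sweeping schedule of Example \ref{ex:analytic1} (within each block the radii run through all patterns of $0$ and near-$1$) makes $\arg f$ fluctuate by a fixed amount infinitely often along a single $\theta$-independent approach; boundary points where $f(\e^{\im\theta})=0$ are negligible by Corollary \ref{co:riesz2}. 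Note that this phase route is not directly open to your construction either: the conjugate of $-L_n\1_{I_n}$ is unbounded near the endpoints of $I_n$, so the uniform boundedness needed in the oscillation step fails --- this is precisely why the paper takes $\psi$ smooth. If you replace your indicator dips by bounded smooth bumps and switch from $\log\abs f$ to $\arg f$ in part (ii), your outline becomes the paper's proof.
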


Before explaining how to construct the function described in {\CR the above theorem}, we consider two simpler examples that share many of the same features. In the first, we drop the boundedness, and in the second, we keep boundedness but drop   analyticity.

\begin{example}\label{ex:analytic1} \textbf{(a)}  {\sl  The function
$$
g(z)=\sum_{n=1}^\infty \frac{z_n}{n}
$$
is in $H^p(\T^\infty)$ and 
fails at almost every boundary point to have radial boundary limit in an approach that is independent of the boundary point in the sense of Theorem \ref{th:example} part (ii).}

\textbf{(b)} {\sl The function 
$$
u(z):=\; \prod_{n=1}^\infty\Big(1+i\frac{z_n+\overline{z}_n}{2n}\Big)\;  
$$
is in $L^\infty(\T^\infty)$  and fails at almost every boundary point to have radial boundary limit in the same sense as in} \textbf{(a)}.
\end{example}

\noindent {\CR The most interesting feature of these examples  is that they allow us, in a  simpler setting than in Theorem \ref{th:example}, to explain   how to find a bad radial approach that is independent of the boundary point $\e^{\im \theta} \in \T^\infty$.   As Theorem \ref{th:example} (whose proof we give shortly) covers the phenomena displayed by both examples, we  discuss  only the main details of example \textbf{(a)}.}

{\CR To that end, we first note   that, by the independence of the variables $\e^{\im \theta_n}$,  the series
\begin{equation*}
	\sum_{n=1}^\infty \frac{\Re \e^{\im \theta_n}}{n} = \sum_{n=1}^\infty \frac{  \cos(\theta_n)}{n}
\end{equation*}
is conditionally convergent  almost everywhere on $\T^\infty$ (see, e.g., \cite[Lemma 3.14, p. 46]{kallenberg2002}).  However, since the terms are bounded and 
$\sum_{n=1}^\infty {\mathbf E}\, \frac{  |\cos(\theta_n)|}{n}=\infty$, we deduce from  \cite[Lemma 3.14, p. 46]{kallenberg2002} that at almost every boundary point  the series is \textit{not} absolutely convergent. Hence for every $M \in \N$,   as $N\rightarrow \infty$, we have
\begin{equation*}
	\mathbf{P} \Big(  \sum_{n=M}^N \frac{\abs{\cos(\theta_n)}}{n} \geq 1 \Big) \longrightarrow 1,
\end{equation*}
and  we may use the Borel-Cantelli lemma to inductively choose a sequence $m_1 < m_2 < m_3< \cdots$ so that for almost every $\e^{\im \theta} \in \T^\infty$, and $k\geq k_0$ large enough, we have
\begin{equation*}
	\sum_{m_k+1}^{m_{k+1}} \frac{\abs{\cos (\theta_n)}}{n}  \geq 1.
\end{equation*}}

We now describe the bad approach working for almost every boundary point. For each $k$ we choose a vector $\mathbf{r}_k = (r_{k,1}, r_{2,k}, \ldots)$ as follows. For   $1 \leq k \leq 2^{m_1}$, we {\CR set}
\begin{equation*}
	\hspace{-47.5mm}\mathbf{r}_k = (r_{1,k},  \ldots, r_{m_1,k}, 0, 0, 0, \ldots) 
\end{equation*}
so that the first $m_1$ coordinates run through all $2^{m_1}$ different $m_1$-tuples consisting of $0$ and $1/2$.  

For the next $2^{m_2 - m_1}$ indices $k$, we choose
\begin{equation*}
	\mathbf{r}_k = (\underbrace{1 - m_1^{-1},   \ldots, 1-m_1^{-1}}_{\text{first $m_1$   entries}}, \underbrace{r_{m_{1}+1,k},  \ldots, r_{m_2,k}}_{\text{middle block}},  \textcolor{black}{\underbrace{0, 0, 0, \ldots}_{\text{all zeroes}}})
\end{equation*}
so that the middle block can   run through all tuples consisting of $0$ and $1/2$. 

Similarly, for the next $2^{m_3-m_2}$ coordinates, we choose
\begin{equation*}
	\mathbf{r}_k = (\underbrace{1 - m_2^{-1},   \ldots, 1-m_2^{-1}}_{\text{\textcolor{black}{first $m_2$  entries}}}, \underbrace{r_{m_{2}+1,k},  \ldots, r_{m_3,k}}_{\text{middle block}}, \textcolor{black}{\underbrace{0, 0, 0, \ldots}_{\text{all zeroes}}}), 
\end{equation*}
again, so that the middle block can run through all tuples consisting of $0$ and $1/2$.

If we continue in this way, it is clear that we get a sequence $\mathbf{r}_k$ such that, for fixed $n$, {\CR we have} $r_{n,k} \nearrow 1^-$ as $k \rightarrow \infty$.  Moreover,  given  arbitrarily large $\ell \in \N$,  for a suitable interval  of indices $k$ {\CR it holds that}
\begin{equation*}
	 \Re f(\mathbf{r_k}\e^{\im \theta}) = \Big(1- \frac{1}{m_\ell}\Big) \sum_{n=1}^{m_\ell}  \frac{\cos(\theta_n)}{n} + \sum_{n = m_{\ell} + 1}^{m_{\ell+1}} r_{{\CR m_\ell},k} \frac{\cos(\theta_n)}{n}
\end{equation*}
The first sum remains unchanged as $k$ varies in this interval, but  the second term will   oscillate between values close to $0$, and, in absolute value, larger than $1/2$. Since for almost every fixed boundary point this behaviour takes place infinitely many times, the statement follows.

We remark that the function in part \textbf{(a)} of the  example is very close to being in $H^\infty(\T^\infty)$ in the sense that for some $c>0$ it holds that  $\int_{\T^\infty}\exp(c|f(e^{i\theta})|^2) \dif\theta<\infty$. The reason   this holds is essentially that the Taylor series for this function is lacunary in a strong sense. Namely,   the variables $z_n$ are   independent of each other
The argument for the function in part \textbf{(b)} is   essentially a minor modification of the argument from the first example, although, initially it was inspired by the inductive method to construct Rudin-Shapiro polynomials. \textcolor{black}{We mention that the    uniform bound can be seen by writing}
\begin{equation*}
	\abs{u(z)}  =  \sqrt{\prod_{n=1}^\infty \Big( 1 + \frac{(z+\bar{z})^2}{4n^2}}\Big).
\end{equation*}

We now turn to the proof of Theorem \ref{th:example}.

\begin{proof}[Proof of Theorem \ref{th:example}]
Set $\delta_n:= ((n+2)\log^2(n+2))^{-1}$ for $n\geq 1.$ Pick a smooth, non-negative, and even  function $\psi$ that satisfies $\psi(t)=1$ for  $t\in (-1/4,1/4)$,  
$\psi(t)=0$ for $|t|\geq 1/2$ and $|\psi(t)|\leq 1$ for all $t$. We construct $f$ as the product
$$
f(z)\;{\CR : =}\;\prod_{n=1}^\infty f_n(z_n)\; {\CR : =}\;\prod_{n=1}^\infty\exp\Big(-u_n(z_n)- \im \, \widetilde u_n(z_n)\Big),
$$
where $u_n$ is the positive harmonic function on $\D$ with   boundary values 
$$
u_n(\e^{it}):= \psi (t/\delta_n)\quad\textrm{for}\quad t\in [-\pi,\pi).
$$
One should note that the functions $f_n$ are continuous up to the boundary and real at the origin.

To see that the  product  converges, we may use  the '$m$te Abschnitt'  $A_mf(z) = \prod_{n=1}^m f_n(z_n)$.
From the definition, we obtain that the $A_mf(0)$ converges to a non-zero value since
$$
A_m(0)=\exp\Big( -\sum_{n=1}^m\frac{1}{2\pi}\int_\T u_n(\e^{i\theta_n}) \dif \theta_n \Big) \geq 
\exp\big(-\sum_{n=1}^\infty  \delta_n) >0.
$$
Hence, by  a standard ${\rm weak}^*$ convergence argument, $A_mf$ converges to a non-trivial element  $f \in H^\infty(\T^\infty)$. This can also be seen following an argument of Hilbert which shows that $f$ has bounded point evaluations at all $z \in \D^\infty \cap c_0$ (see \cite{hls1997}). Moreover,  by the Herglotz representation of $-\log f_n(z_n)$ and the fact that $ \frac{1}{2\pi}\int_\T |u_n|\sim \delta_n$, we deduce that  at any point 
$(z_1,z_2,\ldots)\in   \D^\infty \cap c_0$ it holds that 
$$
|f(z)|\geq\exp\big(-C_1\sum_{n=1}^\infty \frac{\delta_n}{1-|z_n|}\big),
$$
so that $f$ is non-vanishing on $c_0\cap \D^\infty$. 

In order to prove (i), we observe that by basic estimates for the Poisson kernel,   the radial maximal function of $u_n$ satisfies  $Mu_n(e^{it})\geq
C_2\min (1,\delta_n|t|^{-1})$ for some constant $C_2>0$ and all $t\in [-\pi, \pi)$. It follows that
 $$
 \int_\T Mu_n  \frac{\dif t}{2\pi} \geq C_2\delta_n\log (1/\delta_n)\geq \frac{C_3}{(n+2)\log(n+2)}.
 $$
In particular, this yields
\begin{equation}\label{eq10001}
\sum_{n=1}^\infty \int_\T Mu_n  \frac{\dif t}{2\pi}\;=\;\infty.
\end{equation}
Since $0\leq Mu_n\leq 1,$  we may use \eqref{eq10001} and  \cite[Lemma 3.14, p. 46]{kallenberg2002}  to infer that
\begin{equation}\label{eq10002}
\sum_{n=1}^\infty Mu_n(\theta_n)\;=\;\infty \qquad \textrm{for almost every}\;\; (\e^{i\theta_n})\in \T^\infty.\nonumber
\end{equation}
In other words, for almost every boundary point $(\e^{i\theta_n})\in \T^\infty$ there are radii $r'_1,r'_2,\ldots <1$ such that
\begin{equation}\label{eq10003}
\sum_{n=1}^\infty u_n(r'_n\theta_n)\;=\;\infty.\nonumber
\end{equation}
We may especially choose an increasing sequence $\nu_\ell$ of indices so that
\begin{equation}
\sum_{n=\nu_{\ell +1}}^{\nu_{\ell+1}}u_n(r'_n\theta_n)\geq 4^\ell\qquad \textrm{for all}\;\; \ell >1.\nonumber
\end{equation}
The desired radial approach for part (i) of the Theorem is   obtained by  choosing for this boundary point
$r_{n,k}:=1$ for $n\leq \nu_k$,  $r_{n,k}:=r'_n$ for $\nu_k<n\leq \nu_{k+1}$, and $r_{n,k}:=0$ for $ \nu_{k+1}< n,$ and finally by slightly perturbing the chosen radii away from 1.

For part (ii) we perform basically the same argument as above, where the role of the maximal function $Mu_n$ is taken by the absolute value of the conjugate function $|\widetilde u_n|$. Namely,  by the definition of the conjugate function  we see that  $|\widetilde u_n|(e^{it})\geq
C_4\min (1,\delta_n|t|^{-1})$ for some constant $C_4>0$, and for  all $t\in [-\pi, \pi)\setminus [-2\delta_n,2\delta_n]$.  As before, it follows that  $
 \int_\T |\widetilde u_n|\geq \delta_n\log (1/\delta_n)\geq \frac{C_5}{(n+2)\log(n+2)} 
 $
 and we obtain the analogue of \eqref{eq10001} for the functions $|\widetilde u_n|$. This together with the independence and the uniform boundedness of the functions $\widetilde u_n$ (recall that $\psi$ is smooth and the Hilbert transform is locally essentially scaling invariant)
 yields,  for almost every boundary point,
\begin{equation}\label{eq10004}
\sum_{n=1}^\infty |\widetilde u_n(i\theta_n)|\;=\;\infty.
\end{equation}
The proof is finished like in Example \ref{ex:analytic1}, described above, and we obtain the desired radial approach where the  fluctuations of $\arg f$ remain large. This would not yield the counterexample in points where $f(e^{i\theta})=0$, but by Corollary \ref{co:riesz2} the measure of such points is zero.
\end{proof}

The following questions appear quite interesting:
\begin{question}
	Is it possible to remove the restriction on the Fourier spectrum in Theorem \ref{th:fatou}?
\end{question}
\begin{question}\label{qu_analytic1}
Does there exist a bounded analytic function $f\in H^\infty (\T^\infty)$ such that almost surely the radial convergence fails even if  the approach is limited by assuming decreasing radii in $n$, i.e. $r_{n,k}\geq r_{n+1,k}$ for all $n,k$? 
\end{question}
\begin{question}
What is the answer to the above question under the  added  condition that the radial approach does not depend on the point on the boundary?
\end{question}
\begin{question}
What can one say about non-tangential approach for functions in $H^\infty(\T^\infty)$,  or more generally,  in $H^p(\T^\infty)$?  
\end{question}
\begin{question}
	For which radial approaches is the brothers Riesz   theorem on the uniqueness of boundary values true? That is,  when is it true for all $f \in H^\infty(\T^\infty)$ that having   vanishing radial boundary values on a set of positive Lebesgue measure implies   $f \equiv 0$? \textcolor{black}{Note that this is not true for all radial approaches as is demonstrated by Theorem \ref{th:example}.}
\end{question}

\section*{Acknowledgements}

The authors would like to thank Micha\l{} Wojciechowski for pointing out an error in a previous version of the paper.

\bibliographystyle{amsplain}
\def\cprime{$'$} \def\cprime{$'$} \def\cprime{$'$} \def\cprime{$'$}
\providecommand{\bysame}{\leavevmode\hbox to3em{\hrulefill}\thinspace}
\providecommand{\MR}{\relax\ifhmode\unskip\space\fi MR }
\providecommand{\MRhref}[2]{%
  \href{http://www.ams.org/mathscinet-getitem?mr=#1}{#2}
}
\providecommand{\href}[2]{#2}

\end{document}